\documentclass[letterpaper, 10 pt, conference]{ieeeconf}
\IEEEoverridecommandlockouts
\usepackage{lipsum}
\usepackage{amsfonts}
\usepackage{graphicx}
\usepackage{epstopdf}
\usepackage{standalone}
\usepackage{algcompatible}

\ifpdf
  \DeclareGraphicsExtensions{.eps,.pdf,.png,.jpg}
\else
  \DeclareGraphicsExtensions{.eps}
\fi

\usepackage{amsopn}

\usepackage{epsfig} %
\usepackage{amsmath} %
\usepackage{amssymb}  %
\usepackage{bm}
\DeclareMathAlphabet{\mathcal}{OMS}{cmsy}{m}{n}
\usepackage[english]{babel}
\usepackage{url}
\usepackage{algorithm}
\usepackage{color}
\usepackage{pgfplots}
\usepackage{siunitx}
\usepackage{tikz}
\usepackage{tkz-euclide}
\usepackage{chngcntr}
\usepackage{verbatim}
\usepackage{enumerate}
\usepackage[normalem]{ulem}

\definecolor{ao(english)}{rgb}{0.0, 0.5, 0.0}
\usepackage[colorlinks, citecolor = {ao(english)}, linkcolor = {ao(english)}]{hyperref} 
\usepackage{cleveref}
\usepackage{aliascnt}
\usetikzlibrary{calc}
\usepackage{subcaption}
\usepackage{tikz}
\usepackage{pgfplots}
\usetikzlibrary{arrows,shapes,trees,calc,positioning,patterns,decorations.pathmorphing,decorations.markings}
\usetikzlibrary{matrix}
\usepgfplotslibrary{groupplots}
\pgfplotsset{compat=newest}

\usepackage{amsthm}
\usepackage{amsthm}
\usepgfplotslibrary{patchplots}

\crefname{figure}{Fig.}{Fig.}

\newtheorem{thm}{Theorem}
\crefname{thm}{Theorem}{Theorems}

\newtheorem{prop}{Proposition}
\crefname{prop}{Proposition}{Propositions}

\newtheorem{lem}{Lemma}
\crefname{lem}{Lemma}{Lemmas}
\newtheorem{cor}{Corollary}
\crefname{cor}{Corollary}{Corollaries}
\theoremstyle{remark}

\crefname{rem}{Remark}{Remarks}

\theoremstyle{definition}
\newtheorem{ex}{Example}
\crefname{ex}{Example}{Examples}

\crefname{ass}{Assumption}{Assumption}
\usepackage{dsfont}
\let\mathbb=\mathds

\crefname{conj}{Conjecture}{Conjectures}

\theoremstyle{definition}
\newtheorem{defn}{Definition}
\crefname{defn}{Definition}{Definitions}

\crefname{prob}{Problem}{Problems}
\crefname{algorithm}{Algorithm}{Algorithms}

\crefalias{propenumi}{proposition} 
\newcommand{\nset}[1]{(1:#1)}
\newcommand{\submatrix}[3]{{#1}_{#2,#3}}

\newcommand{\Rmn}{\mathbb{R}^{n \times m}}

\newcommand{\Rnn}{\mathbb{R}^{n \times n}}

\newcommand{\rk}{\textnormal{rank}}

\newcommand{\sign}{\textnormal{sign}}

\newcommand{\transp}{\mathsf{T}}
\newcommand{\vari}[1]{\text{S}^{-}(#1)}

\newcommand{\pos}[2]{\text{pos}_{#1}(#2)}
\newcommand{\variz}[1]{\text{S}^{+}(#1)}

\newcommand{\Toep}[1]{\mathcal{T}_{#1}}
\newcommand{\Hank}[1]{\mathcal{H}_{#1}}
\newcommand{\Con}[1]{{\mathcal{C}^{#1}}}
\newcommand{\Obs}[1]{{\mathcal{O}^{#1}}}

\newcommand{\compound}[2]{#1_{[#2]}}

\colorlet{FigColor1}{blue}
\colorlet{FigColor2}{red}
\colorlet{FigColor3}{ao(english)}
\colorlet{FigColor4}{orange}
\pgfplotsset{every axis plot/.append style={line width=1.5pt}}

\crefformat{equation}{\textup{#2(#1)#3}}
\crefrangeformat{equation}{\textup{#3(#1)#4--#5(#2)#6}}
\crefmultiformat{equation}{\textup{#2(#1)#3}}{ and \textup{#2(#1)#3}}
{, \textup{#2(#1)#3}}{, and \textup{#2(#1)#3}}
\crefrangemultiformat{equation}{\textup{#3(#1)#4--#5(#2)#6}}%
{ and \textup{#3(#1)#4--#5(#2)#6}}{, \textup{#3(#1)#4--#5(#2)#6}}{, and \textup{#3(#1)#4--#5(#2)#6}}

\Crefformat{equation}{#2Equation~\textup{(#1)}#3}
\Crefrangeformat{equation}{Equations~\textup{#3(#1)#4--#5(#2)#6}}
\Crefmultiformat{equation}{Equations~\textup{#2(#1)#3}}{ and \textup{#2(#1)#3}}
{, \textup{#2(#1)#3}}{, and \textup{#2(#1)#3}}
\Crefrangemultiformat{equation}{Equations~\textup{#3(#1)#4--#5(#2)#6}}%
{ and \textup{#3(#1)#4--#5(#2)#6}}{, \textup{#3(#1)#4--#5(#2)#6}}{, and \textup{#3(#1)#4--#5(#2)#6}}

\crefdefaultlabelformat{#2\textup{#1}#3}

\title{\LARGE \bf Characterizing variation bounding in discrete-time Hankel operators}
\author{Zijian Liu, Tom Ashani and Christian Grussler
	\thanks{The project was supported by the Israel Science Foundation (grant no. 2406/22) and the Bernard M. Gordon Center for Systems Engineering at the Technion -- IIT, while the third author was also a Jane and Larry Sherman Fellow.}
	\thanks{Z. Liu and C. Grussler are with the Stephen B. Klein Faculty of Aerospace Engineering, Technion -- Israel Institute of Technology, 3200003 Haifa, Israel.
	Emails: {\tt\small zijian-liu@campus.technion.ac.il, \;  cgrussler@technion.ac.il}.}
    \thanks{T. Ashani is with the Faculty of Mathematics, Technion -- Israel Institute of Technology, 3200003 Haifa, Israel. Email: {\tt\small tom.ashani@campus.technion.ac.il}}
}

\usepackage[backend=biber, style=ieee, eprint=true,doi=false,url=false,giveninits=true,style=numeric-comp,sorting=none,natbib=true]{biblatex}
\DeclareSourcemap{
	\maps[datatype=bibtex,overwrite=true]{
		\map{
			\step[fieldsource=journal,
			match=\regexp{IEEE\sTransactions\son\sAutomatic\sControl},
			replace={IEEE Trans. Autom. Control}]
			\step[fieldsource=journal,
			match=\regexp{Optimization\sLetters},
			replace={Optim. Lett.}]
			\step[fieldsource=journal,
			match=\regexp{Linear\sAlgebra\sand\sits\sApplications},
			replace={Linear Algebra Appl.}]	
			\step[fieldsource=booktitle,
			match=\regexp{Advances\sin\sNeural\sInformation\sProcessing\sSystems},
			replace={Adv. Neural. Inf. Process. Syst.}]
			\step[fieldsource=booktitle,
			match=\regexp{IEEE\sInternational\sConference\son\sSystems,\sMan\sand\sCybernetics}, 
			replace={IEEE. Int. Conf. Syst. Man. Cybern.}]
			\step[fieldsource=journal,
			match=\regexp{Foundations\sand\sTrends\sin\sMachine\sLearning},
			replace={Found. Trends Mach. Learn.}]
			\step[fieldsource=journal,
			match=\regexp{Foundations\sof\sComputational\sMathematics},
			replace={Found. Comut. Math.}]
			\step[fieldsource=journal,
			match=\regexp{IEEE\sSignal\sProcessing\sMagazine},
			replace={IEEE Signal Process. Mag.}]
			\step[fieldsource=journal,
			match=\regexp{IEEE\sTransactions\son\sControl\sof\sNetwork\s Systems},
			replace={IEEE Trans. Control. Netw. Syst.}]
			\step[fieldsource=booktitle,
			match=\regexp{European\sControl\sConference},
			replace={Eur. Control Conf.}]
			\step[fieldsource=journal,
			match=\regexp{IEEE\sControl\sSystems\sLetters},
			replace={IEEE Control Syst. Lett.}]
			\step[fieldsource=journal,
			match=\regexp{Numerical\sFunctional\sAnalysis\sand\sOptimization},
			replace={Numer. Funct. Anal. Optim.}]
			\step[fieldsource=journal,
			match=\regexp{IEEE\sTransactions\son\sPattern\sAnalysis\sand\sMachine\sIntelligence},
			replace={IEEE Trans. Pattern. Anal. Mach. Intell.}]
			\step[fieldsource=journal,
			match=\regexp{Journal\sof\sGeometry\sand\sPhysics},
			replace={J. Geom. Phys.}]
			\step[fieldsource=booktitle,
			match=\regexp{Proceedings\sof\sthe},
			replace={Proc.}]
			\step[fieldsource=series,
			match=\regexp{Proceedings\sof\sMachine\sLearning\sResearch},
			replace={Proc. Mach. Learn. Res.}]
			\step[fieldsource=booktitle,
			match=\regexp{Conference\son\sDecision\sand\sControl},
			replace={Conf. Decis. Control}]
			\step[fieldsource=booktitle,
			match=\regexp{Conference\son\sLearning\sTheory},
			replace={Conf. Learn. Theory}]
			\step[fieldsource=journal,
			match=\regexp{SIAM\sReview},
			replace={SIAM Rev.}]
			\step[fieldsource=journal,
			match=\regexp{SIAM\sJournal\son\sMatrix\sAnalysis\sand\sApplications},
			replace={SIAM J. Matrix Anal. Appl.}]
			\step[fieldsource=journal,
			match=\regexp{European\sJournal\sof\sControl},
			replace={Eur. J. Control}]	
			\step[fieldsource=series,
			match=\regexp{Cambridge\sTracts\sin\sMathematics},
			replace={Camb. Tracts Math.}]
			\step[fieldsource=collection,
			match=\regexp{Cambridge\sTracts\sin\sMathematics},
			replace={Camb. Tracts Math.}]
			\step[fieldsource=journal,
			match=\regexp{Journal\sof\sGlobal\sOptimization},
			replace={J. Glob. Optim.}]	
			\step[fieldsource=booktitle,
			match=\regexp{American\sControl\sConference},
			replace={Am. Control. Conf.}]
			\step[fieldsource=booktitle,
			match=\regexp{International\sConference\son\sMachine\sLearning},
			replace={Int. Conf. Mach. Learn.}]
			\step[fieldsource=booktitle,
			match=\regexp{IEEE\sInternational\sConference\son\sSoftware\sQuality\sReliability\sand\sSecurity},
			replace={IEEE Int. Conf. Softw. Qual. Reliab. Secur.}]
			\step[fieldsource=booktitle,
			match=\regexp{British\sMachine\sVision\sConference},
			replace={Br. Mach. Vis. Conf.}]
			\step[fieldsource=journal,
			match=\regexp{Physics\sof\sFluids},
			replace={Phys. Fluids}]
			\step[fieldsource=journal,
			match=\regexp{IEEE\sControl\sSystems\sMagazine},
			replace={IEEE Control Syst. Mag.}]
			\step[fieldsource=booktitle,
			match=\regexp{Conference\son\sEmpirical\sMethods\sin\sNatural\sLanguag\sProcessing},
			replace={Conf. Empir. Methods. Nat. Lang. Process.}]
			\step[fieldsource=journal,
			match=\regexp{SIAM\sJournal\son\sOptimization},
			replace={SIAM J. Optim.}]
			\step[fieldsource=journal,
			match=\regexp{Journal\sof\sMachine\sLearning\sResearch},
			replace={J. Mach. Learn. Res.}]
			\step[fieldsource=journal,
			match=\regexp{Journal\sof\sDynamics\sand\sDifferential\sEquations},
			replace={J. Dyn. Differ. Equ.}]
			\step[fieldsource=journal,
			match=\regexp{SIAM\sJournal\son\sControl\sand\sOptimization},
			replace={SIAM J. Control. Optim.}]
			\step[fieldsource=journal,
			match=\regexp{Statistical\sScience},
			replace={Stat. Sci.}]
			\step[fieldsource=journal,
			match=\regexp{Journal\sof\sthe\sRoyal\sStatistical\sSociety},
			replace={J. R. Stat. Soc.}]
			\step[fieldsource=journal,
			match=\regexp{Physica\sD:\sNonlinear\sPhenomena},
			replace={Physica D.}]
			\step[fieldsource=journal,
			match=\regexp{Journal\sof\sthe\sFranklin\sInstitute},
			replace={J. Franklin Inst.}]
			\step[fieldsource=journal,
			match=\regexp{Communications\sin\sInformation\sand\sSystems},
			replace={Commun. Inf. Syst.}]
			\step[fieldsource=journal,
			match=\regexp{Journal\sof\sFluid\sMechanics},
			replace={J. Fluid Mech.}]
			\step[fieldsource=journal,
			match=\regexp{Journal\sof\sDifferential\sEquations},
			replace={J. Differ. Equ.}]
			\step[fieldsource=journal,
			match=\regexp{Mathematische\sZeitschrift},
			replace={Math. Z.}]
            \step[fieldsource=journal,
			match=\regexp{Journal\sof\sOptimization\sTheory\sand\sApplications},
			replace={J. Optim. Theory Appl.}]
\step[fieldsource=booktitle,match=\regexp{International\sConference\son\sLearning\sRepresentations},
			replace={Int. Conf. Learn. Represent}]
		}
	}
}

\AtEveryBibitem{\clearfield{issn}}
\AtEveryBibitem{\clearfield{eprintclass}}
\begin{document}

\maketitle
\thispagestyle{empty}
\pagestyle{empty}

\begin{abstract}
We investigate the $k$-variation bounding property of the discrete-time Hankel operator, i.e., its invariance under the set of signals with a variation (number of sign changes) of at most $k$. Building on existing sign-consistency criteria, it is shown that this property is equivalent to the external positivity of $k+1$ explicitly realized linear discrete-time systems. Thus, making the property tractable via numerical and analytic certificates. We also derive dominant-pole restrictions for the case of $k=1$. A three-node thermal example illustrates the results and distinguishes variation bounding from variation diminishing. 
\end{abstract}

\section{INTRODUCTION}
Linear time-invariant systems with positivity properties provide a foundation for simplified analysis and controller design. Examples include positive realness and dissipativity \cite{khalil2002nonlinear}, which facilitate stability analysis of interconnected networks \cite{arcak2007passivity,yue2025passivity}, as well as \emph{positive systems}, whose inputs, states, or outputs remain nonnegative. Positive systems enable scalable linear-programming methods for Lyapunov-based design \cite{rantzer2018tutorial} and play a central role in nonlinear analysis tools such as Zames--Falb multipliers \cite{turner2021discrete}, monotone systems \cite{hirsch2006monotone,angeli2003monotone}, and differential positivity \cite{forni2016differentially,mostajeran2018positivity}. Related specializations include relaxation systems \cite{willems1976realization,pates2019optimal}, totally positive differential systems \cite{schwarz1970totally}, and cyclic monotone systems \cite{mallet1990poincare}.

Recently, these classes have been revisited through the lens of \emph{variation diminishment}, i.e., the reduction of sign changes by system operators. For example, relaxation systems correspond to variation-diminishing Hankel operators \cite{grussler2020variation}, and variation has been proposed as a Lyapunov-like measure \cite{margaliot2018revisiting}. Building on these ideas, \cite{grussler2020variation,grussler2021internally} characterized and analyzed systems whose Hankel operators diminish the variation of signals with at most \(k\) sign changes by introducing the concept of compound systems, forming a hierarchy between positive systems (\(k=0\)) and relaxation systems (\(k=\infty\)). At the opposite end of this line of research lies the property of \emph{\(k\)-variation bounding}, which confines states or signals to at most \(k\) sign changes. This property has been used to extend autonomous (i.e., unforced) positive and monotone system analysis \cite{weiss2021cooperative,weiss2019generalization2,alseidi2021discrete,katz2025instability,wu2022diagonal}, to develop signal-based approaches for cyclic monotone systems \cite{tong2026unimodal}, and to provide failure guarantees in sparse optimal control \cite{marmary2025tractabledownfallbasispursuit}.

The main objective of this work is to complement these studies by addressing the open challenge of providing a tractable and analyzable characterization of systems with \(k\)-variation bounding Hankel operator. We envision that this provides grounds for an input–output perspectives on relaxed positive system, which will be amenable for capturing overshoot or non-minimum-phase behavior, as well as motivate extensions of nonlinear tools such as Zames--Falb multipliers. Concretely, in this work, we take initial steps towards the characterization of \(k\)-variation bounding Hankel operators for finite-dimensional discrete-time linear time-invariant (DTLTI) systems. Our approach builds on a recent characterization of \((k+1)\)-sign-consistent Hankel operators \cite{grussler2026efficient} and its equivalence to $k$-variation bounding \cite{grussler2024system}: the infinite-dimensional Hankel matrix is \((k+1)\)-sign-consistent when all its minors of order \(k+1\) share the same sign. Applying sign-consistency results from \cite{pena_matrices_1995,grussler2024system}, we show that certifying \(k\)-variation bounding is equivalent to verifying input–output positivity of \(k+1\) associated DTLTI \emph{compound systems}. The input–output positivity of these compound systems can be checked efficiently using existing certificates \cite{grussler2019tractable,taghavian2023external,drummond2023externally,weller2023external}. Our explicit \emph{generalized compound system} realizations also provide the base for further analytical treatment, e.g., for the analysis of pole structures as exemplified here for the case of $k=2$. This is in line with prior work on variation-diminishing Hankel operators \cite{grussler2020variation} and on variation-bounding observability operators \cite{grussler2024system}. Notably, the pole constraint we obtain does not mirror the eigenvalue constraints for autonomous variation-bounding systems \cite{alseidi2021discrete,wu2022diagonal}. Our findings are illustrated by a three-node thermal chain.

The remainder of the manuscript is organized as follows. Preliminaries appear in \Cref{sec:prelim}. Our main results are presented in \Cref{sec:main}. A thermal example is presented in \Cref{sec:ex}, and we conclude in \Cref{sec:conc}.

\section{PRELIMINARIES}
\label{sec:prelim}
\subsection{Notations}
We write $\mathds{Z}$ for the set of integers and $\mathds{R}$ for the set of reals, with  $\mathds{Z}_{\ge 0}$ and $\mathds{R}_{\ge 0}$ standing for the respective subsets of nonnegative elements -- the corresponding notations are also used for subsets starting from non-zero values, strict inequality as well as reversed inequality signs. The set of real sequences with indices in $\mathbb{Z}$ is denoted by 
$\mathbb{R}^{\mathbb{Z}}$. For matrices $X = (x_{ij}) \in \Rmn$, we say that $X$ is
\emph{nonnegative},
$X \geq 0$ 
or $X \in \Rmn_{\geq 0}$
if all elements $x_{ij} \in \mathbb{R}_{\geq 0}$ -- corresponding notations are used for matrices with strictly positive entries and reversed inequality signs. These notations are also used for sequences $x = (x_i) \in \mathbb{R}^{\mathbb{Z}}$. For $k, l \in \mathds{Z}$, we write $(k:l) := \{k,k+1,\dots,l\}$, $k \leq l$. In the case $k > l$, the notation represents the empty set. If $X\in \Rnn$, then $\sigma(X) = \{\lambda_1(X),\dots,\lambda_n(X)\}$ denotes its \emph{spectrum}, where the eigenvalues are ordered by descending absolute value, i.e., $\lambda_1(X)$ is the eigenvalue with the largest magnitude, counting multiplicity. If the magnitude of two eigenvalues coincides, we sub-sort them by decreasing real part. %

For $X \in \Rmn$, the submatrix with rows $I \subset
(1:n)$ and columns $J \subset (1:m)$ is written as $\submatrix{X}{I}{J}$. In the case of subvectors, we simply write $x_I$. With slight abuse of notation, we also use this to denote subsets of ordered index sets $x \in \mathcal{I}_{n,r}$, where 
\begin{equation*}
\mathcal{I}_{n,r} := \{ v = \{v_1,\dots,v_r\} \subset \mathds{N}: 1\leq v_1 < \dots < v_r \leq n \}.
\end{equation*} 
The $i$-th element of $\mathcal{I}_{n,r}$ is taken with respect to the lexicographic ordering on $\mathcal{I}_{n,r}$. Moreover, if $i \in \mathds{N}$ is an element of $v \in \mathcal{I}_{n,r}$, we denote its \emph{position within $v$} by $\pos{v}{i}$, i.e., $\pos{v}{i} = l$ if $i = v_l$. 

The \emph{determinant} of $X \in \Rnn$ is denoted by $|X|$, which can be computed using the \emph{generalized Laplace expansion along the columns with index $I \in \mathcal{I}_{n,r}$} (see, e.g., \cite[Sec.~0.8.9]{horn2012matrix}) as 
\begin{equation}
    |X| = \sum_{\substack{J\in\mathcal{I}_{n,r}}}(-1)^{\sum_{j\in J}j+\sum_{i\in I}i}|X_{J,I}||X_{(1:n)\setminus J,(1:n) \setminus I}|.
    \label{eq:gen_laplace}
\end{equation}
A \emph{(consecutive) $j$-minor} of $X \in \Rmn$ is a minor which is constructed of $j$ columns and $j$ rows of $X$ (with consecutive indices). 

\subsection{Variation diminishing maps}
\label{sec:vardim}
The \emph{variation} of a sequence or vector $u$ is defined 
as the number of sign changes in $u$. We employ two versions that only differ in the treatment of zero entries: 
\begin{equation*}
\vari{u} := \sum_{i} \mathbb{1}_{\mathbb{R}_{< 0}}(\tilde{u}_i \tilde{u}_{i+1}), 
    \quad 
    \vari{0} := -1
\end{equation*}
where $\tilde{u}$ is the vector resulting from deleting all zeros in $u$ and $\mathbb{1}_{\mathbb{A}}(x)$ is the indicator function with subset $\mathbb{A}$, i.e., $\mathbb{1}_{\mathbb{A}}(x) = 1$ if $x \in \mathbb{A}$ and zero otherwise. Further, {the \emph{strict variation} is defined by}
\begin{equation*}
\variz{u} := \sum_{i} \mathbb{1}_{\mathbb{R}_{< 0}}(\bar{u}_i \bar{u}_{i+1}), 
\end{equation*}
where $\bar{u}$ is the vector resulting from replacing zeros by elements that maximize the resulting sum. Obviously, $\vari{u} \leq \variz{u}$, {but equality does not necessarily need to hold, e.g., $\vari{\begin{bmatrix}
		1 & 0 & 2
\end{bmatrix}} = \vari{\begin{bmatrix}
		1 & 2
\end{bmatrix}} = 0$, but $\variz{\begin{bmatrix}
		1 & 0 & 2
\end{bmatrix}} = \vari{\begin{bmatrix}
		1 & -1 & 2
\end{bmatrix}}  = 2$. }

Most essential to this work is the definition of variation bounding. 
\begin{defn}
\label{def:svb_k} \label{def:vb_k}
A linear map $u \mapsto X u$ is said to be 
\begin{enumerate}
    \item \emph{(strictly) $k$-variation bounding}, $k \in \mathds{Z}_{\ge0}$ if for all $u\neq 0$ with $\vari{u} \leq k$ it holds that $\vari{Xu} \leq k$ ($\variz{Xu} \leq k$)
    \item  \emph{(strictly) $k$-variation diminishing}, if $X$ is (strictly) $j$-variation bounding for all $j \in (0:k)$. If additionally, the sign of the first non-zero {element} in $u$ and $Xu$, {respectively}, coincide whenever $\vari{u} = \vari{Xu}$ ($\vari{u} = \variz{Xu}$), we say that $X$ is \emph{(strictly) order-preserving $k$-variation diminishing}.
\end{enumerate}
\end{defn}

\subsection{Total Positivity Theory}
\emph{Total positivity theory} \cite{karlin1968total} is known to provide algebraic conditions for variation bounding/diminishing operators property by means of compound matrices. For $X \in \Rmn$, the $(i,j)$-th entry of the so-called \emph{r-th multiplicative compound matrix} 
$\compound{X}{r} \in \mathbb{R}^{\binom{n}{r} \times \binom{m}{r}}$ is defined via $|\submatrix{X}{I}{J}|$, where $I$ and $J$ are the $i$-th and $j$-th element of the $r$-tuples in $\mathcal{I}_{n,r}$ and 
$\mathcal{I}_{m,r}$, respectively.

For example, if $X \in \mathbb{R}^{3 \times 3}$, then
\begin{align*}
 \compound{X}{2} = \begin{bmatrix}
|X_{\{1,2 \},\{1,2 \}}| & |X_{\{1,2 \},\{1,3\}}| & |X_{\{1,2 \},\{2,3\}}|\\
|X_{\{1,3 \},\{1,2 \}}| & |X_{\{1,3 \},\{1,3\}}| & |X_{\{1,3 \},\{2,3\}}|\\
|X_{\{2,3 \},\{1,2 \}}| & |X_{\{2,3 \},\{1,3\}}| & |X_{\{2,3 \},\{2,3\}}|\
\end{bmatrix}
\end{align*}
In case of $k=0$, we define $\compound{X}{0} = 1$. In our derivations, the following properties of the multiplicative compound matrix will be elementary (see, e.g., \cite[Section~6]{fiedler2008special} and \cite[Subsection~0.8.1]{horn2012matrix}).
\begin{lem}\label{lem:compound_mat}
	Let $X \in \mathbb{R}^{n \times p}$ and $Y \in \mathbb{R}^{p \times m}$.
	\begin{enumerate}[i)]
		\item $\compound{(XY)}{r} = \compound{X}{r}\compound{Y}{r}$ (Cauchy-Binet formula). \label{item:Cauchy_Binet}
		\item  For $ p = n $: $\sigma(\compound{X}{r}) = \{\prod_{i \in I} \lambda_i(X): I \in \mathcal{I}_{n,r} \}$. \label{item:compound_eigen}
		
	\end{enumerate} 
\end{lem}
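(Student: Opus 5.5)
Both items of \Cref{lem:compound_mat} are classical; the plan is to derive i) directly from the Cauchy--Binet determinant identity and then obtain ii) from i) by triangularization.

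\textbf{Item i).} Fix $I \in \mathcal{I}_{n,r}$ and $J \in \mathcal{I}_{m,r}$. The $(I,J)$ entry of $\compound{(XY)}{r}$ is $|(XY)_{I,J}| = |X_{I,(1:p)} Y_{(1:p),J}|$, the determinant of a product of an $r\times p$ and a $p\times r$ matrix. The classical Cauchy--Binet formula gives
\begin{equation*}
|X_{I,(1:p)} Y_{(1:p),J}| = \sum_{K \in \mathcal{I}_{p,r}} |X_{I,K}|\,|Y_{K,J}|.
\end{equation*}
The right-hand side is exactly the $(I,J)$ entry of $\compound{X}{r}\compound{Y}{r}$, since both compounds index their rows and columns by the lexicographically ordered $\mathcal{I}_{\cdot,r}$. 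If $r>p$, the sum is empty and $(XY)_{I,J}$ has rank at most $p<r$, so both sides vanish.

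The only real content is the Cauchy--Binet identity itself. To prove it, expand $|\sum_k x_{i_\cdot k} y_{k j_\cdot}|$ multilinearly in the columns, writing the $l$-th column of $(XY)_{I,J}$ as $\sum_k y_{k j_l} X_{I,k}$. Terms with a repeated column index $k$ vanish. The remaining terms are grouped by the underlying set $K$ and, after reordering, give $|X_{I,K}|$ times the signed sum $|Y_{K,J}|$. The main work is this sign bookkeeping. The case $r=0$ is trivial by the convention $\compound{X}{0}=1$.

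\textbf{Item ii).} Let $p=n$. By Schur's theorem, write $X = U T U^{-1}$ with $T$ upper triangular over $\mathbb{C}$ and diagonal entries $\lambda_1(X),\dots,\lambda_n(X)$. Item i) extends verbatim to complex matrices, and applying it to $U U^{-1} = I_n$ gives $\compound{(U^{-1})}{r} = \compound{U}{r}^{-1}$. Hence
\begin{equation*}
\compound{X}{r} = \compound{U}{r}\compound{T}{r}\compound{U}{r}^{-1},
\end{equation*}
so $\sigma(\compound{X}{r}) = \sigma(\compound{T}{r})$.

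It then remains to show that $\compound{T}{r}$ is upper triangular in the lexicographic order with diagonal entries $\prod_{i\in I}\lambda_i(X)$. If $I=J$, then $T_{I,I}$ is upper triangular with determinant $\prod_{i\in I} t_{ii}$. If $I$ is lexicographically after $J$, take the first position $l$ where they differ; then $i_l > j_l$. For every row index $i_s$ with $s \ge l$, we have $i_s \ge i_l > j_l \ge j_1,\dots,j_l$, so the entries of $T_{I,J}$ in rows $l,\dots,r$ and columns $1,\dots,l$ all vanish. This zero block has size $(r-l+1)\times l$, with total dimension exceeding $r$, so $|T_{I,J}|=0$.

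This last combinatorial check is the main obstacle in ii), and the zero-block rank argument settles it. Counting multiplicities over $I \in \mathcal{I}_{n,r}$ gives the claimed spectrum.
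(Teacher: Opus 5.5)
Your proof is correct. The paper gives no proof of this lemma and only cites standard references (Fiedler, Sec.~6; Horn--Johnson, Sec.~0.8.1); your argument is the classical one behind those citations, namely Cauchy--Binet by multilinear expansion, then Schur triangularization, $\compound{(U^{-1})}{r} = \compound{U}{r}^{-1}$, and the zero-block argument showing that $\compound{T}{r}$ is upper triangular in lexicographic order with diagonal entries $\prod_{i\in I} t_{ii}$.
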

It is readily seen that $X$ is order-preserving $0$-variation diminishing if and only if $X = \compound{X}{1} \geq 0$. This equivalence can be generalized to higher orders by the following definitions and characterizations (see~\cite[Prop.~7]{grussler2020variation} and \cite[Theorem 5.1.1]{karlin1968total}). 
\begin{defn}\label{def:k_pos_matrix}
    Let $X \in \Rmn$ and $k \leq \min\{m,n\}$. $X$ is called 
    \begin{enumerate}[i.]
        \item \emph{(strictly) $k$-sign consistent} if $\varepsilon\bigl(\compound{X}{k}\bigr) \geq (>) 0$ for some $\varepsilon \in \{-1,1\}$.
        \item \emph{(strictly) $k$-sign regular} if $X$ is (strictly) $j$-sign consistent for all $j \in (1:k)$. 
        \item \emph{(strictly) $k$-positive} if $\compound{X}{j} \geq (>) 0$ for all $j \in (1:k)$. In case of $k = \min\{m,n\}$, $X$ is also called \emph{(strictly) totally positive}.
        \end{enumerate}

\end{defn}
\begin{prop}\label{prop:sc_k_mat_vb_m}
	For $X \in \Rmn$, $n \geq m$, the following hold:
	\begin{enumerate}
		\item if $k \leq \min\{m,n\}-1$, then $X$ is strictly $k-1$-variation bounding if and only if $X$ is strictly $k$-sign consistent. 
		\item if $\rk(X) = m < n$, then $X$ is $m-1$-variation bounding if and only if $X$ is $m$-sign consistent. \label{item:SC_full_rank}\label{item:SC_low_rank}
		\item if $k < \rk(X)$ and any $k$ columns of $X$ are linearly independent, then $X$ is $k-1$-variation bounding if and only if $X$ is $k$-sign consistent  \label{item:SC_lin_ind}
	\end{enumerate}
\end{prop}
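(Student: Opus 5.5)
The three items are the classical Gantmacher--Krein/Karlin characterisations, and I would prove them through one duality between sign-consistency of $\compound{X}{k}$ and sign changes of images. The workhorse is the following observation. Let $u$ be a combination of at most $k$ columns of $X$, with coefficients that alternate in sign in a suitable sense, and suppose $Xu$ has many sign changes. Then one can pick rows $i_1<\dots<i_{k+1}$ where $Xu$ alternates. Consider the $(k+1)\times(k+1)$ bordered determinant built from these rows, the $k$ chosen columns and the column $Xu$. It vanishes. Expanding it along the last column with \eqref{eq:gen_laplace} (here the ordinary Laplace expansion) gives a signed sum of $k$-minors of $X$ weighted by $(Xu)_{i_l}$. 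Sign-consistency makes every term carry the same sign, which forces a contradiction unless $\vari{Xu}\le k-1$.

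\textbf{Sufficiency} (sign-consistent $\Rightarrow$ variation bounding).
1. Let $\vari{u}\le k-1$. Partition the support of $u$ into at most $k$ blocks of constant sign and write $Xu=\sum_{l=1}^{k}\epsilon_l c_l$. Each $c_l$ is a nonnegative combination of the columns in block $l$, and the signs $\epsilon_l$ alternate.
2. By Cauchy--Binet (Lemma~\ref{lem:compound_mat}\ref{item:Cauchy_Binet}), the $n\times k$ matrix $C=[c_1,\dots,c_k]$ equals $X$ times a nonnegative block matrix. Its $k$-minors are therefore nonnegative combinations of $k$-minors of $X$ with column sets respecting the block order. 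Hence $C$ is again (strictly) $k$-sign consistent.
3. Apply the bordered-determinant argument to $C$ and the vector $(\epsilon_l)$.
   \begin{itemize}
   \item In the strict case every $k$-minor of $C$ is nonzero with a common sign. A determinant that alternates in the $(Xu)_{i_l}$ then cannot vanish, which yields $\variz{Xu}\le k-1$ and gives item 1.
   \item In the non-strict cases, nonzero minors are only guaranteed if $C$ has rank $k$. This is where the hypotheses of items 2 and 3 enter. If any $k$ columns of $X$ are independent (item 3), or $\rk X=m$ with $k=m$ (item 2), then the $c_l$ are independent because $u\neq0$. Some $k$-minor of $C$ is therefore nonzero, and the same expansion gives $\vari{Xu}\le k-1$.
   \end{itemize}

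\textbf{Necessity} (variation bounding $\Rightarrow$ sign-consistent). I would argue by contraposition.
1. Suppose two $k$-minors $|X_{I,J}|$ and $|X_{I',J'}|$ have strictly opposite signs.
2. First reduce to pairs that are adjacent: sharing $k-1$ rows, or sharing $k-1$ columns. Do this by walking between index sets one index at a time; along the way a sign flip must occur between neighbours, or else pass through a zero, which needs the rank hypothesis to be handled.
3. Same columns $J$: choose $u$ supported on $J$ with $X_{I,J}u$ alternating in sign on $I$. This is possible when $X_{I,J}$ is invertible, using Cramer's rule. Opposite signs of the neighbouring minor then force $Xu$ to have $k$ sign changes while $\vari{u}\le k-1$, a contradiction.
4. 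Same rows: apply the dual argument.
5. For the strict statement of item 1, strict variation bounding also forbids zero minors, by a perturbation of $u$ that creates an extra sign change through a zero entry. This gives strict sign-consistency.

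\textbf{Main obstacle.} The hard part is necessity in the degenerate cases of items 2 and 3, where zero minors interrupt the adjacency argument. There I would first try a perturbation/density step: replace $X$ by $X+\delta E$, with $E$ strictly totally positive and $\delta\to0$, keeping variation bounding approximately via the independence hypothesis. The rank assumptions guarantee that limits of the perturbed sign-consistent matrices retain a nonzero $k$-minor, so the common sign $\varepsilon$ is well defined. Given that the paper cites these facts from \cite[Prop.~7]{grussler2020variation}, \cite[Theorem 5.1.1]{karlin1968total} and \cite{grussler2024system}, the expected proof is simply to invoke them. The sketch above indicates how they reduce to the bordered Laplace expansion.
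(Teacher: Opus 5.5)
The paper does not prove this proposition; it only cites \cite[Prop.~7]{grussler2020variation} and \cite[Theorem 5.1.1]{karlin1968total}. To the extent that you also fall back on these references, you match it. You do, however, present your sketch as a reduction to the bordered Laplace expansion, and for items 2 and 3 that reduction breaks at a specific step.

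\textbf{Non-strict sufficiency.} You argue that the $c_l$ are independent, so some $k$-minor of $C$ is nonzero, so ``the same expansion gives $\vari{Xu}\le k-1$''. The expansion only involves the rows $I=\{i_1,\dots,i_{k+1}\}$ on which $Xu$ alternates. All it yields is $|C_{I\setminus\{i_l\},:}|=0$ for every $l$, i.e.\ $\rk C_{I,:}<k$. A nonzero $k$-minor on other rows does not contradict this. In fact nothing restricted to $I$ is contradictory. For $k=2$, $C_{I,:}=ab^\transp$ with $a=(1,-1,1)^\transp$ has vanishing $2$-minors, yet $C_{I,:}\epsilon=(b^\transp\epsilon)\,a$ alternates twice. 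The contradiction can only come from rows outside $I$, which your argument never looks at. A related point: in item 3, independence of the $c_l$ does not follow from $u\neq0$ alone, because each $c_l$ may combine several columns. You need Cauchy--Binet together with sign consistency for that.

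The standard repair is multiplicative smoothing. Take strictly totally positive $G_\delta\to I_n$. Then $|(G_\delta X)_{I,J}|=\sum_K|(G_\delta)_{I,K}||X_{K,J}|$ has strict sign $\varepsilon$ as soon as the columns $J$ are independent. This is exactly what ``any $k$ columns independent'', respectively $\rk(X)=m$, guarantees. Your strict argument then gives $\variz{G_\delta Xu}\le k-1$, and lower semicontinuity of $\vari{\cdot}$ gives $\vari{Xu}\le k-1$.

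\textbf{Non-strict necessity.} Here you leave the zero minors open, and the proposed additive perturbation does not help, since Cauchy--Binet controls minors of products, not of sums. For example, $X=\begin{bmatrix}0&1\\1&0\\0&0\end{bmatrix}$ is $2$-sign consistent and $1$-variation bounding. Yet for any entrywise positive $E$ and small $\delta>0$, $X+\delta E$ has $2$-minors of both signs, namely $\approx -\delta e_{31}$ and $\approx \delta e_{32}$. By item 2 it is therefore not $1$-variation bounding.

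For item 2 the zeros should be avoided rather than perturbed away. The row sets with nonzero $m$-minor are the bases of the row matroid of $X$, and these are connected by single exchanges. So opposite signs force a pair $I$, $I'=I\setminus\{i_s\}\cup\{i'\}$ with nonzero minors of opposite sign. Your Cramer step then works, provided $Xu$ is prescribed on $I$ as the restriction of an alternating sign pattern on $I\cup\{i'\}$, with $|(Xu)_{i_s}|$ dominant. An alternating pattern on $I$ itself gives only $|I|-1$ sign changes whenever $i'$ lies between two elements of $I$; this also affects your item 1.

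For item 3, applying item 2 to each $X_{:,J}$ gives a sign $\varepsilon_J$. Showing that $\varepsilon_J$ does not depend on $J$ is exactly where $k<\rk(X)$ must enter, and your ``dual argument'' does not supply it. Variation bounding is not transpose-invariant. The natural witness for a column exchange, the cofactor vector $v$ of $X_{I,J\cup J'}$, only makes $Xv$ vanish on $I$. That contradicts $\variz{Xv}\le k-1$, but not $\vari{Xv}\le k-1$.

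As it stands, items 2 and 3 rest on the citation, not on your sketch. Two smaller omissions in item 1 are easily patched: the case where $u$ has fewer than $k$ sign blocks, and, for $\variz{Xu}$, the case where all $(Xu)_{i_l}$ vanish.
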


Next, we will review how checking the sign of the elements in $\compound{X}{k}$ can be simplified compared to computing all of its entries. 

 A related result are the following conditions for (strict) sign consistency (see~\cite[Theorem 2.2]{pena_matrices_1995} \& \cite[Theorem~25]{grussler2024system}). 
\begin{prop}\label{prop:k_ssc__mat_pena_i}
    Let $X \in \Rmn$ be such that $n \geq 2m$ and all $m$-minors $|\submatrix{X}{\alpha}{\nset{m}}|$ defined by
    \begin{enumerate}[i.]
        \item $\alpha = \{\nset{m-r},(t:t+r-1)\}$ with $1\leq r < m $ and $m-r < t \leq n-r+1$, and $\alpha = (t:t+m-1),\;\; 1 \leq t \leq m$ have the same strict sign $\varepsilon \in \{-1,1\}$.
        \item $\alpha = (t:t+m-1),\;\; m+1 \leq t \leq n-m+1$ have the same (strict) sign in $\{0,\varepsilon\}$. 
    \end{enumerate}
Then, $X$ is (strictly) $m$-sign consistent. In the strict case, this is also a necessary condition.
\end{prop}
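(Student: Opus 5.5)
The plan is to view the $m$-minors $\Delta(\alpha):=|\submatrix{X}{\alpha}{\nset{m}}|$, $\alpha\in\mathcal{I}_{n,m}$, as Plücker coordinates of the column space of $X$. Since $X$ has exactly $m$ columns, $\compound{X}{m}$ is a single column whose entries are exactly these $\Delta(\alpha)$. Hence $m$-sign consistency just means $\varepsilon\Delta(\alpha)\geq(>)0$ for every $\alpha$.

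\textbf{Necessity (strict case).} This direction is immediate: every set listed in i.\ and ii.\ is some $\alpha\in\mathcal{I}_{n,m}$, so strict $m$-sign consistency forces all of them to have the strict sign $\varepsilon$.

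\textbf{Sufficiency: the relation to use.} For sufficiency I would use the three-term Grassmann--Plücker relations. For $S$ with $|S|=m-2$ and $a<b<c<d$ not in $S$ they read
\begin{equation*}
\Delta(S\cup\{a,c\})\,\Delta(S\cup\{b,d\}) = \Delta(S\cup\{a,b\})\,\Delta(S\cup\{c,d\}) + \Delta(S\cup\{a,d\})\,\Delta(S\cup\{b,c\}).
\end{equation*}
These follow from the generalized Laplace expansion \eqref{eq:gen_laplace} applied to a suitable $2m\times 2m$ block matrix built from two copies of $X$. The relation expresses one minor as a positive combination of products of others divided by a third. So if the divisor is known to be strictly $\varepsilon$-signed and the other factors are $\varepsilon$-signed (or zero), the remaining minor inherits sign $\varepsilon$ (or $\{0,\varepsilon\}$).

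\textbf{Induction.} I would induct on a dispersion measure of $\alpha$, such as the number of gaps plus the position of the first gap. The base cases are exactly the sets in i.\ (an initial block $\nset{m-r}$ followed by one consecutive block, and the first $m$ contiguous windows) and the contiguous windows in ii.

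\emph{Step 1.} Using sets from i.\ as divisors, I propagate strict positivity (times $\varepsilon$) to all sets of the form $\nset{m-r}\cup\beta$ with $\beta$ arbitrary. At each step I choose $a,b,c,d$ so that the gapped set $\alpha$ plays the role of $S\cup\{a,c\}$ or $S\cup\{b,d\}$. All other four sets then have strictly smaller dispersion, and the divisor is one already shown strict.

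\emph{Step 2.} I handle sets not starting at $1$ by sliding. I combine the contiguous windows from ii., which are only weakly signed, with the strict "initial-anchored" sets from Step 1, which serve as divisors. The hypothesis $n\ge 2m$ guarantees that the required anchored sets, e.g.\ $\nset{m-r}\cup(t:t+r-1)$ with $t>m-r$, exist for every window position.

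\textbf{Main obstacle and the strict case.} The main obstacle is the bookkeeping in Step 2. In every application of the relation the divisor must be a strictly signed minor, never one of the possibly-zero windows from ii. Otherwise the nonstrict conclusion $\varepsilon\Delta(\alpha)\ge 0$ breaks down. I would first try an explicit ordering: process $\alpha$ by its smallest element $\alpha_1$ descending from large $t$. This way the divisor always contains an initial segment $\nset{m-r}$ with $r\ge1$. In the strict case all windows are strictly signed, the same induction yields $\varepsilon\Delta(\alpha)>0$ throughout, and this gives strict $m$-sign consistency.
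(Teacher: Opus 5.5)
The paper gives no proof of this proposition; it imports it from Pe\~na and from Grussler et al. Your argument therefore has to stand on its own. Necessity is fine. Using the three-term Grassmann--Pl\"ucker relation (which does hold for sorted minors and any $S$) with a strictly $\varepsilon$-signed divisor is the right tool.

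The gap is the induction order, which is the whole content of the theorem. You assert that for each non-base $\alpha$ one can choose $a<b<c<d$ so that the divisor is already strict and the other four sets are ``less dispersed''. You never exhibit this choice, and with the measures you suggest it is false.

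\begin{itemize}
\item In the relation that actually works, two of the four companions contain a \emph{longer} initial block and can be just as spread out as $\alpha$. Take $m=4$, $\alpha=\{1,3,5,7\}$, $S=\{1,5\}$ and $(a,b,c,d)=(2,3,4,7)$. The companion $S\cup\{a,d\}=\{1,2,5,7\}$ has as many missing indices as $\alpha$ and a later first gap, so it is not smaller under ``number of gaps plus position of the first gap''. It is known only because it contains $\{1,2\}$.
\item The companion $S\cup\{c,d\}$ moves an element to the right, so sum-type measures such as $\sum_i(\alpha_i-i)$ increase.
\item In Step 2, ordering by $\alpha_1$ alone does not close the induction, because $S\cup\{\alpha_1,c\}$ has the same smallest element.
\item The hypothesis $n\ge 2m$ plays no role in the existence of the anchored sets; it only makes ii.\ nonempty.
\end{itemize}

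The fix is a lexicographic induction. For $\alpha\in\mathcal{I}_{n,m}$, let $p$ be maximal with $(1:p)\subseteq\alpha$, and set $\gamma:=\alpha\setminus(1:p)$, so $\min\gamma\ge p+2$. Treat $p=m,m-1,\dots,0$ in this order, and for fixed $p$ induct on $\max\gamma-\min\gamma$.

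If $\gamma$ is consecutive (including $\gamma=\emptyset$), $\alpha$ is a base set. For $p\ge 1$ it is exactly the set $(1:m-r)\cup(t:t+r-1)$ of i.\ with $r=m-p$ and $t=\min\gamma$. For $p=0$ it is a window $(t:t+m-1)$ with $t\ge 2$, taken from i.\ or ii.

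Otherwise take $b=\min\gamma$, $d=\max\gamma$, some $c\notin\alpha$ with $b<c<d$, $a=p+1$ and $S=\alpha\setminus\{b,d\}$. Then
\[
\Delta(S\cup\{a,c\})\,\Delta(\alpha)=\Delta(S\cup\{a,b\})\,\Delta(S\cup\{c,d\})+\Delta(S\cup\{a,d\})\,\Delta(S\cup\{b,c\}).
\]
\begin{itemize}
\item The three sets containing $a$ contain $(1:p+1)$. They were therefore handled at an earlier stage and are strictly $\varepsilon$-signed.
\item The sets $S\cup\{c,d\}$ and $S\cup\{b,c\}$ have the same $p$ and a strictly smaller span of $\gamma$.
\end{itemize}
Hence $\varepsilon\Delta(\alpha)>0$ for every $p\ge1$, and $\varepsilon\Delta(\alpha)\ge 0$ ($>0$ in the strict case) for $p=0$. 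This makes your plan rigorous. It also shows that, for the non-strict conclusion, the windows with $2\le t\le m$ only need to be weakly signed.
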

\cref{prop:k_ssc__mat_pena_i} can also be applied to matrices with dimensions $m > n$ by consideration of $X^\transp$. Checking strict $2$-sign consistency of
\begin{equation*}
    X = \begin{bmatrix}
        1 & 1 & 1 & 1 \\ 
        1 & 2 & 3 & 4 
    \end{bmatrix} \in \mathds{R}^{2 \times 4}
\end{equation*}
only requires verifying that
\begin{align*}
    r=1:& \quad \det{\begin{bmatrix} 1 & 1\\ 1 & 2 \end{bmatrix}},\;
\det{\begin{bmatrix} 1 & 1 \\ 1 & 3 \end{bmatrix}}, \;
\det{\begin{bmatrix} 1 & 1 \\ 1 & 4 \end{bmatrix}}
    \\
    r=2:& \quad 
    \det{\begin{bmatrix} 1 & 1\\ 1 & 2 \end{bmatrix}}, \;
    \det{\begin{bmatrix} 1 & 2 \\ 1 & 3 \end{bmatrix}}, \;
    \det{\begin{bmatrix} 1 & 3\\ 1 & 4 \end{bmatrix}}.
\end{align*}
share the same strict sign. In case that the first $m-1$ columns of $X \in \mathds{R}^{n \times m}$, $n \geq m$, are strictly $m-1$-sign consistent, (strict) $m$-sign consistency of $X$ only requires verification of the consecutive $m$-minors \cite[Theorem~3.3.2]{karlin1968total}.
\begin{prop}\label{prop:m_1_to_m}
    Let $X \in \mathds{R}^{n \times m}$, $n \geq m$, be such that 
    \begin{enumerate}[i.]
        \item $X_{(1:n),(1:m-1)}$ is strictly $m-1$-sign consistent with sign $\varepsilon \in \{-1,1\}$. 
        \item all consecutive $m$-minors share the same (strict) sign in $\{0,\varepsilon\}$.
    \end{enumerate}
   Then, $X$ is (strictly) $m$-sign consistent. 
\end{prop}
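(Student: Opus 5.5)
The plan is to reduce the claim to the classical case where all submatrices are "consecutive" by exploiting the strict sign consistency of the first $m-1$ columns. Fix arbitrary rows $\alpha = \{\alpha_1<\dots<\alpha_m\}\subset(1:n)$. We must show $\varepsilon|X_{\alpha,(1:m)}| \geq 0$ (resp. $>0$). We will proceed by induction on the dispersion $d(\alpha) := \alpha_m - \alpha_1 - (m-1)$. The base case $d(\alpha)=0$ is exactly hypothesis ii.

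For the inductive step, let $d(\alpha)>0$. Pick an index $\beta$ with $\alpha_1<\beta<\alpha_m$ and $\beta\notin\alpha$. We use the Sylvester-type three-term (Plücker) determinantal identity on the $(m+1)\times m$ matrix with rows $\alpha\cup\{\beta\}$. Expanding along the last column, or equivalently applying the identity obtained by appending a column and using Laplace \eqref{eq:gen_laplace}, gives a relation of the form
\[
|X_{\alpha\setminus\{\alpha_p\}\cup\{\beta\},(1:m-1)}|\,|X_{\alpha,(1:m)}| = |X_{\alpha\setminus\{\alpha_1\},(1:m-1)}|\,|X_{\alpha\setminus\{\alpha_m\}\cup\{\beta\},(1:m)}| + |X_{\alpha\setminus\{\alpha_m\},(1:m-1)}|\,|X_{\alpha\setminus\{\alpha_1\}\cup\{\beta\},(1:m)}|.
\]
This is the standard identity from \cite[Ch.~2]{karlin1968total}, and the exact row bookkeeping will be checked there. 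The two $m$-row sets $\alpha\setminus\{\alpha_m\}\cup\{\beta\}$ and $\alpha\setminus\{\alpha_1\}\cup\{\beta\}$ have strictly smaller dispersion. By induction, both corresponding $m$-minors therefore have sign in $\{0,\varepsilon\}$ (strictly $\varepsilon$ in the strict case). All $(m-1)$-minors of the first $m-1$ columns are nonzero with common sign, by hypothesis i. Hence the left-hand coefficient is nonzero with that sign, and the right-hand side is a sum of two terms each having sign in $\{0,\varepsilon\cdot\text{sign}\}$. Dividing then yields $\varepsilon|X_{\alpha,(1:m)}|\geq 0$, with strict inequality in the strict case. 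Note that only minors using the full column set $(1:m)$ appear, since $X$ has exactly $m$ columns.

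The main obstacle is getting the identity with correct signs and row choices so that the coefficient multiplying the target minor is a genuine $(m-1)$-minor of the first $m-1$ columns and both remaining terms enter with a plus sign. For this we will choose $\beta$ and the deleted row carefully, taking $p$ so that the rows stay ordered with $\beta$ in the interior. If the direct three-term form is awkward, a fallback is available. Perturb $X$ to a strictly sign-consistent matrix, for instance by multiplying with a strictly totally positive kernel close to the identity, use Cauchy--Binet (\cref{lem:compound_mat}) to preserve the signs, prove the strict case, and pass to the limit to obtain the non-strict statement.
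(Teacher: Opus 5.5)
Your route is the right one: strong induction on the spread $\alpha_m-\alpha_1$, driven by a three-term Sylvester identity on the rows $\alpha\cup\{\beta\}$. The paper gives no argument of its own here and only cites Karlin, and this Fekete-type induction is the standard proof of such results. However, the identity you display is wrong as written, and it is the only nontrivial step. The coefficient $|X_{\alpha\setminus\{\alpha_p\}\cup\{\beta\},(1:m-1)}|$ has $m$ rows but only $m-1$ columns, so it is not a minor, and no choice of $p$ repairs this. Write $\Delta(S):=|X_{S,(1:m)}|$, $\delta(T):=|X_{T,(1:m-1)}|$ and $P:=\alpha\setminus\{\alpha_1,\alpha_m\}$. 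The correct identity is
\begin{multline*}
\delta(P\cup\{\beta\})\,\Delta(\alpha) = \delta(P\cup\{\alpha_1\})\,\Delta(P\cup\{\beta,\alpha_m\})\\
+\delta(P\cup\{\alpha_m\})\,\Delta(P\cup\{\alpha_1,\beta\}).
\end{multline*}
Your right-hand side is correct, but on the left \emph{both} extreme rows must be removed before inserting $\beta$. The identity holds with two plus signs for every $\beta\notin\alpha$ with $\alpha_1<\beta<\alpha_m$, so no careful choice of $\beta$ or of a deleted row is needed.

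To prove it, apply the Desnanot--Jacobi identity $|B|\,|B^{\{1,m+1\}}_{\{m,m+1\}}| = |B^{1}_{m}|\,|B^{m+1}_{m+1}|-|B^{1}_{m+1}|\,|B^{m+1}_{m}|$ (superscripts: deleted rows; subscripts: deleted columns). Use the $(m+1)\times(m+1)$ matrix $B:=[X_{\alpha\cup\{\beta\},(1:m)}\;\; e]$, where $e$ is the unit vector marking the row $\beta$. Since $\alpha_1<\beta<\alpha_m$, rows $1$ and $m+1$ of $B$ are $\alpha_1$ and $\alpha_m$. Expand the three determinants containing $e$ along that column. Let $q$ be the position of $\beta$ in $\alpha\cup\{\beta\}$. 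This produces sign factors $(-1)^{q+m+1}$ for $|B|$, $(-1)^{q+m-1}$ for $|B^1_m|$ and $(-1)^{q+m}$ for $|B^{m+1}_m|$, which cancel to give exactly the display. With this in place your sign argument goes through verbatim, in both the strict and non-strict cases.

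Drop the perturbation fallback, though. By Cauchy--Binet, the consecutive $m$-minors of $GX$ are positive combinations of \emph{all} $m$-minors of $X$. Their signs are precisely what you are trying to establish, so hypothesis ii.\ does not transfer to $GX$ and that argument is circular. It is also unnecessary, since the corrected identity already handles the non-strict case directly.
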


\subsection{System Operators}
We consider finite-dimensional DTLTI systems  \((A,b,c)\) of the form
\begin{equation}\label{eq:SISO_d}
\begin{aligned}
    x(t+1) &= A x(t) + b\,u(t),\\
    y(t)   &= c x(t),
\end{aligned}
\end{equation}
where \(A\in\mathbb{R}^{n\times n}\) and \(b,c^\top\in\mathbb{R}^n\). Its impulse response, i.e., the output $y(t)$ to the input $u(t) = \mathbf{1}_{\{0\}}$, computes as 
\begin{equation*}
    g(t) = \begin{cases}
        cA^{t-1}b & t \geq 1\\
        0 & t \leq 0
    \end{cases}
\end{equation*}
and we say that the system is \emph{(strictly) externally positive} (\ref{eq:SISO_d}) if $g(t) \geq (>) 0$ for all $t \geq 1$. The following system operators associated with \cref{eq:SISO_d} %
will be used throughout this work:
\begin{enumerate}
    \item \emph{(finite-time) controllability \&
	observability operators}
    	\begin{align*}
		\Con{N}(A,b) &:= \begin{bmatrix}
			b & Ab & \dots & A^{N-1}b
		\end{bmatrix} \\
		\Obs{N}(A,c) &:= \begin{bmatrix}
			c^\transp & A^\transp  c^\transp & \dots & {A^\transp}^{N-1} c^\transp
		\end{bmatrix}^\transp
	\end{align*}
    where for $N=0$, they are defined to be empty matrices. 
    \item \emph{Hankel operator}
  \begin{align*}
	(\Hank{g} u)(t) &:= \sum_{\tau=-\infty}^{-1} g(t-\tau)u(\tau)
	= \sum_{\tau=1}^{\infty} g(t+\tau)u(-\tau).
	\label{eq:def_hank_disc}  
\end{align*} \normalsize
\end{enumerate}
$\Hank{g}$ is the limit (for $N\to\infty$) of the finite truncated matrix representations $\Hank{g}^N u = H_g(1,N,N)u(-1:-N)$, 
where we define \small \begin{align*}%
H_g(t,i,j) &:= \begin{bmatrix}
	g(t) & g(t+1) & \dots & g(t+j-1) \\
	g(t+1) & g(t+2) & \dots & g(t+j) \\
	\vdots & \vdots & & \vdots \\
	g(t+i-1) & g(t+i) & \dots & g(t+i+j-2)
\end{bmatrix}\\ &= \Obs{i}(A,c) A^{t-1} \Con{j}(A,b), \; t \geq 1
\end{align*}\normalsize 
Note that analogously to the discussion in \cite[Lemma~3.4]{grussler2021internally}, $\Hank{g}$ is (strictly) $k$-sign consistent if and only if $H_g(1,N,N)$ is (strictly) $k$-sign consistent for all $N \geq k$. In particular, by \cite{grussler2020variation} the following hold. 
\begin{prop}\label{prop:Hankel_kpos}
    Let $(A,b,c)$ be as in \cref{eq:SISO_d} have impulse response $g$. Then, the following are equivalent: 
\begin{enumerate}
    \item $\Hank{g}$ is (strictly) order-preserving $(k-1)$-variation diminishing. 
    \item $\Hank{g}$ is (strictly) $k$-positive.
    \item $H_g(1,k,N)$ is (strictly) $k$-positive for all $N \geq k$.
\item The \emph{consecutive compound systems} 
\begin{equation}
    \label{eq:comp_real}
    (\compound{A}{j}, \compound{\Con{j}(A,b)}{j},\compound{\Obs{j}(A,c)}{j})
\end{equation}
are (strictly) externally positive for $j \in (1:k)$. 
\end{enumerate}
\end{prop}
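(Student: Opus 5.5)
The plan is to close the cycle of implications (1)$\Leftrightarrow$(2)$\Rightarrow$(3)$\Rightarrow$(4)$\Rightarrow$(2). Throughout we use the factorization $H_g(t,i,j)=\Obs{i}(A,c)A^{t-1}\Con{j}(A,b)$ together with the Cauchy--Binet formula, \cref{lem:compound_mat}~\ref{item:Cauchy_Binet}.

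\textbf{(1)$\Leftrightarrow$(2).} This is the classical total positivity statement \cite[Theorem~5.1.1]{karlin1968total}. A matrix is order-preserving $(k-1)$-variation diminishing if and only if it is $k$-positive; in the strict case the statement is strictly variation diminishing if and only if strictly $k$-positive. We apply this to the truncations $H_g(1,N,N)$. Since $\Hank{g}$ is the limit of these truncations, and we may pad inputs with trailing zeros, the property of $\Hank{g}$ is equivalent to the property holding for all $N\ge k$. In the non-strict direction we need a limiting argument: a sign change in the output of $\Hank{g}$ is already visible in some finite truncation.

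\textbf{(2)$\Rightarrow$(3).} This is immediate, because $H_g(1,k,N)$ is the submatrix of $H_g(1,N,N)$ formed by its first $k$ rows, and every minor of a submatrix is a minor of the full matrix.

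\textbf{(3)$\Rightarrow$(4).} Fix $j\in(1:k)$ and take the consecutive $j$-minor with rows $(1:j)$ and columns $(t:t+j-1)$. This minor equals $\det H_g(t,j,j)$. By Cauchy--Binet,
\begin{equation*}
\det H_g(t,j,j)=\compound{\Obs{j}(A,c)}{j}\,\compound{A}{j}^{\,t-1}\,\compound{\Con{j}(A,b)}{j},
\end{equation*}
which is exactly the impulse response of the $j$-th compound system \cref{eq:comp_real} at time $t$. Since the minor is nonnegative (resp.\ positive) by (3), each compound system is (strictly) externally positive.

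\textbf{(4)$\Rightarrow$(2).} By the identity above, (4) states that all minors of the form $\det H_g(t,j,j)$ are nonnegative (resp.\ positive), i.e., all consecutive $j$-minors of $H_g(1,j,N)$ taken from its first rows. The task is to upgrade this to all $j$-minors of $H_g(1,N,N)$ for every $j\le k$. In the strict case we proceed by induction on $j$, using \cref{prop:m_1_to_m} applied to the transpose: if the first $j-1$ rows are strictly $(j-1)$-positive and all consecutive $j$-minors are positive, then the matrix is strictly $j$-positive. The Hankel structure helps here. Every consecutive $j\times j$ block, with consecutive rows $(s:s+j-1)$ and consecutive columns, equals $H_g(t+s-1,j,j)$, so all consecutive minors of $H_g(1,N,N)$ coincide with impulse-response values of the compound systems. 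We then apply Fekete-type criteria (consecutive minors suffice for strict total positivity) to get strict $k$-positivity of every $H_g(1,N,N)$.

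\textbf{Main obstacle.} The difficult step is the non-strict case of (4)$\Rightarrow$(2), where consecutive-minor criteria fail in general. The first approach is a perturbation argument. We replace $g$ by a nearby strictly $k$-positive Hankel sequence, for instance by adding small terms $\epsilon\sum_i \alpha_i\rho_i^t$ with distinct positive $\rho_i$, which give a totally positive Vandermonde-type Hankel matrix. This perturbation keeps the compound systems strictly externally positive. We then pass to the limit $\epsilon\to0$, using continuity of minors. The delicate point is that adding this term must preserve positivity of all consecutive compound minors. The fallback is to cite the argument of \cite{grussler2020variation}, which handles exactly this case.
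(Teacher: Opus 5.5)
The paper does not prove this proposition; it quotes it from \cite{grussler2020variation}. Measured against that, most of your cycle is sound.
\begin{itemize}
\item (2)$\Rightarrow$(3) is trivial.
\item (3)$\Rightarrow$(4) is the Cauchy--Binet identity $|H_g(t,j,j)|=\compound{\Obs{j}(A,c)}{j}\compound{A}{j}^{t-1}\compound{\Con{j}(A,b)}{j}$.
\item The strict case of (4)$\Rightarrow$(2) is correct. The consecutive minor of $H_g(1,N,N)$ on rows $(s:s+j-1)$ and columns $(t:t+j-1)$ equals $|H_g(s+t-1,j,j)|$, so Fekete's criterion applied directly to $H_g(1,N,N)$ gives strict $k$-positivity. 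As literally stated, your induction via \cref{prop:m_1_to_m} only delivers strict $j$-sign consistency of $H_g(1,j,N)$, not $j$-positivity; it is Fekete on the full matrix that does the work.
\end{itemize}
There is one small caveat in (1)$\Leftrightarrow$(2). For infinitely supported inputs in the strict case, the bound $\variz{H_g(1,M,N)\,u(-1:-N)}\le\vari{u}$ does not pass to the limit $N\to\infty$. The reason is that $\variz{\cdot}$ is only upper semicontinuous, so zeros can appear in the limit. There you need to aggregate the input over its sign blocks and use that the aggregated columns still have strictly positive minors (basic composition formula).

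The genuine gap is the non-strict (4)$\Rightarrow$(2), which is the only non-routine implication. Your perturbation does not close it. Put $q_\epsilon=g+\epsilon p$. At any $t$ with $|H_g(t,j,j)|=0$, which non-strict (4) allows, Jacobi's formula gives
\begin{equation*}
|H_{q_\epsilon}(t,j,j)|=\epsilon\,\mathrm{tr}\bigl(\mathrm{adj}(H_g(t,j,j))\,H_p(t,j,j)\bigr)+O(\epsilon^2).
\end{equation*}
The adjugate consists of signed $(j-1)$-minors of $H_g(t,j,j)$. For $j\ge 3$ most of these have non-consecutive rows or columns, which are exactly the minors about which (4) says nothing.

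The sign would be under control if $H_g(t,j,j)\succeq 0$, because then $H_g(t,j,j)+\epsilon H_p(t,j,j)\succ 0$ since $H_p(t,j,j)\succ0$. But positive semidefiniteness requires all principal minors, again including non-consecutive ones, to be nonnegative. That is a consequence of the $k$-positivity you are trying to prove, not of (4).

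So the claim that the perturbation keeps the compound systems strictly externally positive is unsupported. The non-strict consecutive-minor criterion also fails for general matrices. For example, $\left[\begin{smallmatrix}1&0&2\\1&0&1\end{smallmatrix}\right]$ has nonnegative entries and vanishing consecutive $2$-minors, yet its minor on columns $\{1,3\}$ is $-1$. A genuinely Hankel-specific argument is therefore required. As written, the non-strict half of the equivalence rests entirely on your fallback citation of \cite{grussler2020variation}, which is also all the paper provides. You should present it as an imported result rather than as a proof sketch.
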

A main advantage of the compound system characterization in \cref{prop:Hankel_kpos} lies in its tractability via analytic and computational certificates such as \cite{grussler2019tractable,taghavian2023external,farina2011positive,drummond2023externally,weller2023external}. Similar characterizations have also been shown for the observability \& controllability operators in the $k$-positive \cite{grussler2021internally} and $k$-sign consistent cases \cite{grussler2024system}. 

\section{Main Results}
\label{sec:main}
The objective of this work is to make the identification of (strict) $k$-variation bounding Hankel operators $\Hank{g}$ tractable and analyzable via the derivation of generalized compound systems, analogously to \cref{prop:Hankel_kpos}. To this end, we recall the following recently derived partial analogue \cite{grussler2026efficient}.
\begin{prop}\label{prop:reduced_hankel_SC_k}
     Let $(A,b,c)$ be as in \cref{eq:SISO_d} have impulse response $g$. Then, the following are equivalent: 
     	\begin{enumerate}
		\item $\Hank{g}$ is (strictly) $k$-sign consistent. 
		\item $H_g(1,k,N)$ is (strictly) $k$-sign consistent for all $N \geq k$. 
	\end{enumerate}
\end{prop}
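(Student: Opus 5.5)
The implication from (1) to (2) is immediate: $H_g(1,k,N)$ is a submatrix of $H_g(1,N,N)$ formed by its first $k$ rows. Every $k$-minor of $H_g(1,k,N)$ is therefore a $k$-minor of $H_g(1,N,N)$. By the remark preceding \cref{prop:Hankel_kpos}, (strict) $k$-sign consistency of $\Hank{g}$ means the same for every $H_g(1,N,N)$, so the minors of the subblock inherit a common (strict) sign. The substance is the converse: a common sign on the minors using only the first $k$ rows must propagate to all $k$-minors $|H_g(1,N,N)_{I,J}|$ with arbitrary row indices $I\in\mathcal{I}_{N,k}$.

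For the converse I would use the shift structure of the Hankel matrix. Each row window $(t:t+k-1)$ gives $H_g(t,k,N) = H_g(1,k,N+t-1)_{(1:k),(t:t+N-1)}$, because $g(t+i+j-2)$ depends only on $i+j$. Hence every consecutive-row $k$-minor of $H_g(1,N,N)$ is a minor of $H_g(1,k,N')$ with $N'=N+t-1$, and it carries the common sign $\varepsilon$. The sign $\varepsilon$ is the same for all $N'$, since the minors of $H_g(1,k,N)$ appear among those of $H_g(1,k,N')$ for $N'\ge N$. By symmetry of the Hankel matrix ($H_g(1,N,N)^\transp = H_g(1,N,N)$), the same holds for consecutive-column minors. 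The remaining task is to pass from these structured minors, where the rows form a single consecutive block, to arbitrary row sets. I would do this with \cref{prop:k_ssc__mat_pena_i} applied to the $N\times k$ column slices $X = H_g(1,N,N)_{(1:N),J}$.

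In the strict case, the minors required by \cref{prop:k_ssc__mat_pena_i} are not merely consecutive. They have rows $\{(1:k-r),(t:t+r-1)\}$, that is, an initial segment plus one consecutive block, and in a Hankel matrix those are not directly minors of $H_g(1,k,\cdot)$. To handle them I would use the Hankel symmetry again: swap the roles of rows and columns, and apply the proposition to $k\times N$ row slices with rows $(1:k)$ fixed, which are exactly submatrices of $H_g(1,k,N)$. This shows that every $k\times k$ submatrix with first-$k$ rows and arbitrary columns has sign $\varepsilon$, which is the hypothesis already. The genuine step is then an induction on the largest row index $\max I$. I would write the row set as $I=\{i_1<\dots<i_k\}$ and reduce it using the Sylvester/Desnanot--Jacobi identity (a three-term Plücker relation). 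This identity expresses $|X_{I,J}|\,|X_{I',J'}|$ through products of minors whose row sets are lexicographically smaller or more consecutive, and positivity of the right-hand terms propagates the sign strictly. The non-strict case then follows by a perturbation argument, in the style of \cite[Lemma~3.4]{grussler2021internally}, using density of strictly sign-consistent Hankel matrices from realizations.

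I expect the main obstacle to be this propagation from first-$k$-row minors to arbitrary-row minors. Hankel structure only translates consecutive row blocks, while the Plücker-type identities produce minus signs that must be controlled. If the direct induction fails, my fallback is the realization $H_g(1,N,N)=\Obs{N}(A,c)\Con{N}(A,b)$ together with Cauchy--Binet (\cref{lem:compound_mat}), factoring through $H_g(1,k,\cdot)$ via shifts $A^{t-1}$. This is the route I suspect \cite{grussler2026efficient} takes.
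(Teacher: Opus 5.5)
Your direction (1)$\Rightarrow$(2) is correct. So is the observation that, via shifts and the symmetry $|H_{I,J}|=|H_{J,I}|$, every $k$-minor of $H_g(1,N,N)$ whose row set \emph{or} column set is a consecutive block is a minor of some $H_g(1,k,N')$, and hence has sign $\varepsilon$. But that is all the proposal proves. The paper offers no proof of this statement; it imports it from \cite{grussler2026efficient}, so your argument must stand on its own.

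The converse hinges on minors in which both index sets are gapped. Already for $k=2$ this includes $|H_{\{1,3\},\{1,3\}}|=g(1)g(5)-g(3)^2$. For these you give only a plan, and as described it cannot work:
\begin{itemize}
\item The row/column swap is vacuous, as you note yourself.
\item Three-term Pl\"ucker relations among $k$-minors sharing a column set are identities valid for \emph{every} matrix. For a general $N\times k$ matrix, the consecutive maximal minors do not fix the signs of the others: rows $(1,0),(0,1),(-1,-1)$ give $\Delta_{12}=\Delta_{23}=1$ but $\Delta_{13}=-1$. In $\Delta_{13}\Delta_{24}=\Delta_{12}\Delta_{34}+\Delta_{14}\Delta_{23}$, the left side is a product of two unknown gapped minors and the right side contains a third, so nothing bootstraps.
\item Identities that vary the column set (Desnanot--Jacobi, or Pl\"ucker relations of $[\,I_N\ \ H_g(1,N,N)\,]$) bring in minors of order $\neq k$. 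The hypothesis does not control their signs.
\end{itemize}

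What is missing is a mechanism that injects information no such identity contains. For $k=2$ one can see what is needed. Set $\Delta_{ab}:=|H_{\{a,b\},J}|$. The linear dependence of the three rows of $H_{(1:3),J}$ gives $\Delta_{13}\,g(j+1)=\Delta_{23}\,g(j)+\Delta_{12}\,g(j+2)$ for $j\in J$, and $\Delta_{12},\Delta_{23}$ are covered. To conclude, however, you need sign information on $g$ itself on these windows, essentially that $g$ changes sign at most once. That is lower-order information, and it must first be extracted from the hypothesis.

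Your fallback has the same gap. The rows $cA^{i-1}$, $i\in I$, of $\Obs{N}(A,c)$ equal $\Obs{k}(A,c)A^{t-1}$ only for $I=(t:t+k-1)$. That is precisely the case you already have, so Cauchy--Binet gives nothing new.

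Finally, the perturbation remark does not settle the non-strict case. In the paper, \cite[Lemma~3.4]{grussler2021internally} is used only to pass from $\Hank{g}$ to its truncations, not as a density result. You give no Hankel- and realization-preserving perturbation $g_\epsilon\to g$ under which (2) becomes strict. Such a perturbation is far from automatic: even \cref{prop:k_ssc__mat_pena_i} needs strict signs on its initial minors in its non-strict part.
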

By \cref{prop:reduced_hankel_SC_k}, it suffices to apply \cref{prop:k_ssc__mat_pena_i} to $H_g(1,k,N)$ for all $N \geq k$ in order to check that $\Hank{g}$ is (strictly) $k$-sign consistent. Equivalently, this requires checking the (strict) sign consistency among $k$-minors of the form
\begin{multline}
    \begin{vmatrix}
        H_g(1,k,k-r) & H_g(t,k,r)
    \end{vmatrix} = \\
    \compound{\Obs{k}(A,c)}{k} \begin{bmatrix}
        \Con{k-r}(A,b) & A^{t-1} \Con{r}(A,b)
    \end{bmatrix}_{[k]}, \label{eq:Hankel_k_minors}
\end{multline}
for all $1 \leq r \leq k$ and $t \geq k-r+1$, where the equality is a result of the Cauchy-Binet formula (see~\cref{lem:compound_mat}). Due to the multi-linearity of the compound matrix, inherited from the standard determinant, and the Cauchy-Binet formula, there must exist an $F^{\Con{k-r}(A,b),k}\in \mathds{R}^{\binom{n}{k} \times \binom{n}{r}}$ such that 
\begin{multline}
    \begin{bmatrix}
        \Con{k-r}(A,b) & A^{t-1} \Con{r}(A,b)
    \end{bmatrix}_{[k]} = \\ F^{\Con{k-r}(A,b),k} \compound{A}{r}^{t-1} \compound{\Con{r}(A,b)}{r}. \label{eq:contr_op_compound}
\end{multline}
Thus, the $k$-minors in \cref{eq:Hankel_k_minors} correspond to the impulse response of systems with realization 
\begin{equation*}
    (\compound{A}{r},\compound{A}{r}^{k-r}\compound{\Con{r}(A,b)}{r}, \compound{\Obs{k}(A,c)}{k} F^{\Con{k-r}(A,b),k})
\end{equation*}
 In order to provide an explicit formula for $F^{\Con{k-r}(A,b),k}$, we derive the following generic lemma.
\begin{lem}\label{lem:compound_multi_lin}
    Let $X = \begin{bmatrix}
        Y & Z
    \end{bmatrix}\in \Rmn$, $1 \leq r \leq m \leq n$, $Y \in \mathds{R}^{n \times m-r}$ and $Z \in \mathds{R}^{n \times r}$. 
    Then, $$\compound{X}{m} = F^{Y,m} \compound{Z}{r}$$,
    
    where $F^{Y,m} \in \mathds{R}^{\binom{n}{m} \times \binom{n}{r}}$ is defined for $r < m$ by
    \begin{equation*}
    F^{Y,m}_{i,j} := \begin{cases}
        (-1)^{\varepsilon(I\setminus J,I)}   |Y_{I\setminus J,(1:m-r)}| & \text{if } J \subset I\\ 
        0 & \text{if } J \not \subset I
    \end{cases}
    \end{equation*}
    with $\varepsilon(I\setminus J,I) := \sum_{i \in I \setminus J} \pos{I}{i} + \frac{(m-r)(m-r+1)}{2}$, and $I$ and $J$ are the $i$-th and $j$-th element of $\mathcal{I}_{n,m}$ and 
$\mathcal{I}_{n,r}$, respectively. In case that $r = m$, $F^{Y,m} = I_{\binom{n}{m}}$. 

\end{lem}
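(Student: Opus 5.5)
The plan is to compute each entry of $\compound{X}{m}$ directly with the generalized Laplace expansion \cref{eq:gen_laplace}, expanding along the last $r$ columns, i.e.\ the columns of $Z$. Fix $I \in \mathcal{I}_{n,m}$, the $i$-th row index set, and consider the $m\times m$ matrix $X_{I,(1:m)} = \begin{bmatrix} Y_{I,(1:m-r)} & Z_{I,(1:r)}\end{bmatrix}$. Apply \cref{eq:gen_laplace} with column set $(m-r+1:m) \in \mathcal{I}_{m,r}$. The sum then runs over row subsets $\tilde J \in \mathcal{I}_{m,r}$ of positions within $I$. Each such $\tilde J$ corresponds bijectively to a subset $J = \{I_l : l \in \tilde J\} \subset I$ with $J\in\mathcal{I}_{n,r}$, so that $\pos{I}{j} = l$ for $j = I_l$. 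This yields
\[
|X_{I,(1:m)}| = \sum_{J\subset I,\,J\in\mathcal{I}_{n,r}} (-1)^{s(J)} |Z_{J,(1:r)}|\,|Y_{I\setminus J,(1:m-r)}|,
\]
with $s(J) = \sum_{j\in J}\pos{I}{j} + \sum_{l=m-r+1}^{m} l$.

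Extending the sum over all $J\in\mathcal{I}_{n,r}$ by placing zeros where $J\not\subset I$ gives exactly $(F^{Y,m}\compound{Z}{r})_{i}$, once the sign is shown to equal $(-1)^{\varepsilon(I\setminus J,I)}$. Writing $\{\pos{I}{j}: j \in J\}$ and $\{\pos{I}{i}: i\in I\setminus J\}$ as complementary subsets of $(1:m)$, we have
\[
\sum_{j\in J}\pos{I}{j} = \frac{m(m+1)}{2} - \sum_{i\in I\setminus J}\pos{I}{i}.
\]
Also $\sum_{l=m-r+1}^m l = \frac{m(m+1)}{2} - \frac{(m-r)(m-r+1)}{2}$. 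Adding, and working mod $2$ (negation does not change parity), $s(J) \equiv \sum_{i\in I\setminus J}\pos{I}{i} + \frac{(m-r)(m-r+1)}{2}$, which is $\varepsilon(I\setminus J,I)$.

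The case $r=m$ is trivial, since then $Y$ is empty and $X=Z$, so $\compound{X}{m}=\compound{Z}{m}$. The only delicate point is the sign bookkeeping: making sure that the Laplace expansion is applied to the submatrix $X_{I,(1:m)}$, so that the indices in the exponent are positions within $I$ and not the original row labels. Handling this through $\pos{I}{\cdot}$ as above is exactly what makes the parity reduction work.
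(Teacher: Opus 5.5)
Your proof is correct and follows essentially the paper's argument: a generalized Laplace expansion of each $m$-minor $|X_{I,(1:m)}|$, followed by reindexing the sum over subsets $J \subset I$. The one difference is that the paper expands along the first $m-r$ columns (those of $Y$), so the sign $(-1)^{\varepsilon(I\setminus J,I)}$ appears directly after substituting $J = I\setminus S$; your expansion along the $Z$-columns instead needs the extra parity reduction, which you carry out correctly.
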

\begin{proof}
Let $I \in \mathcal{I}_{n,m}$ be the $i$-th element in $\mathcal{I}_{n,m}$. Then, by the definition of the compound matrix, ${\compound{X}{m}}_{i} = |X_{I,(1:m)}|$. Using the generalized Laplace expansion \cref{eq:gen_laplace} along the first $m-r$ columns, it follows from our definition of $Y$ and $Z$ that 
\begin{equation*}
    {\compound{X}{m}}_{i} = \sum_{\substack{S \subset I\\ S \in\mathcal{I}_{n,m-r}}}(-1)^{\varepsilon(S,I)}|Y_{S,(1:m-r)}||Z_{I\setminus S,(1:r)}|,
\end{equation*}
where 
\begin{align*}
\varepsilon(S,I) &:= \sum_{s \in S}pos_I(s)+ \sum_{1 \leq i \leq m-r}i\\ 
&= \sum_{s \in S}pos_I(s) + \frac{(m-r)(m-r+1)}{2}. 
\end{align*}
 Therefore, by re-indexing the sum via the complements of all $S \in\mathcal{I}_{n,m-r}$, $J = I\setminus S \in \mathcal{I}_{n,r}$, we get that
\begin{align*}
 {\compound{X}{m}}_{i} &= \sum_{\substack{J \subset I\\ J \in\mathcal{I}_{n,r}}}(-1)^{\varepsilon(I\setminus J,I)}\small|Y_{I\setminus J,(1:m-r)}| |Z_{J,(1:r)}| \\
 &= \sum_{j = 1}^{\binom{n}{r}} F^{Y,m}_{i,j} {\compound{Z}{r}}_j,  
\end{align*}
which proves our formula. 

\end{proof}

In the following, let us illustrate \cref{lem:compound_multi_lin} by an example. 
\begin{ex}
Let $b \in \mathds{R}^3$, $A \in \mathds{R}^{3 \times 3}$ and $X = \begin{bmatrix}
    b & A^tb
\end{bmatrix}$. Then, by \cref{lem:compound_multi_lin} $\compound{X}{2} = F^{b,2} A^t b$,
where
\begin{align*}
  &\begin{matrix}
        J = \{1\} & J = \{2\} & J = \{3\}
    \end{matrix} \\
   F^{b,2} = &   \begin{bmatrix}
        -b_2 &\qquad b_1&\qquad0\\-b_3&\qquad 0&\qquad b_1\\0&\qquad -b_3&\qquad b_2
    \end{bmatrix}    \begin{matrix}
        I=\{1,2\} \\I=\{1,3\} \\I=\{2,3\}
    \end{matrix}
\end{align*}

\end{ex}

Consequently, application of \cref{prop:sc_k_mat_vb_m}, yields the following main result: a complete characterization of strict $k$-variation bounding Hankel operators in terms of \emph{generalized compound systems}. 
\begin{thm}
\label{thm:ssck_via_compound_sign}
Let $(A,b,c)$ in \cref{eq:SISO_d} have impulse response $g$ and $1 \leq k \leq n$. Then, the following are equivalent: 
\begin{enumerate}
\item $\Hank{g}$ is strictly $k-1$-variation bounding. 
    \item $\mathcal{H}_g$ is strictly $k$-sign consistent. 
    \item All \emph{generalized compound systems} 
\begin{equation}
 {\left(\compound{A}{r},\compound{A}{r}^{k-r}\compound{\Con{r}(A,b)}{r},\varepsilon \compound{\Obs{k}(A,c)}{k} F^{\Con{k-r}(A,b),k}\right)}, \label{eq:gen_compound_sys}
\end{equation}
 are strictly externally positive, $r \in (1:k)$, where $\varepsilon := \sign(|H_g(1,k,k)|)$.
\end{enumerate}
\end{thm}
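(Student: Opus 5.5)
The proof splits into the two equivalences (i)$\Leftrightarrow$(ii) and (ii)$\Leftrightarrow$(iii).

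\emph{(i)$\Leftrightarrow$(ii).} I will pass through finite truncations. Strict $(k-1)$-variation bounding of $\Hank{g}$ should be equivalent to strict $(k-1)$-variation bounding of $H_g(1,N,N)$ for all $N$ large enough. Likewise, strict $k$-sign consistency of $\Hank{g}$ holds if and only if the same property holds for $H_g(1,N,N)$ for all $N \geq k$; this follows from the remark after the definition of the Hankel operator, in analogy with \cite[Lemma~3.4]{grussler2021internally}.

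For each finite $N$, I will apply \cref{prop:sc_k_mat_vb_m}, item 1, to $X = H_g(1,N,N)$, taking $N \geq k+1$ so that $k \leq \min\{m,n\}-1$. This gives the equivalence at every truncation level. To pass to the limit, I will argue that an input $u$ with $\vari{u}\le k-1$ violating the bound for $\Hank{g}$ can be approximated by finitely supported inputs. Only finitely many output coordinates are needed to witness a violation of $\variz{\cdot}\le k-1$, so the violation already appears at a finite truncation. The converse direction is immediate by embedding finite inputs.

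\emph{(ii)$\Leftrightarrow$(iii).} By \cref{prop:reduced_hankel_SC_k}, strict $k$-sign consistency of $\Hank{g}$ is equivalent to strict $k$-sign consistency of $H_g(1,k,N)$ for all $N\ge k$. Since the minors are taken over all $k$ rows, every $k$-minor of $H_g(1,k,N)$ is a maximal minor of this $k\times N$ matrix. I will apply \cref{prop:k_ssc__mat_pena_i}, in its strict form, to the transpose $X = H_g(1,k,N)^\transp \in \mathds{R}^{N\times k}$ with $N\ge 2k$. Its strict version is both necessary and sufficient, and the minors it requires are exactly those of \cref{eq:Hankel_k_minors}: column sets $(1:k-r)\cup(t:t+r-1)$ with $1\le r\le k$ and $t\ge k-r+1$. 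The case $r=k$ gives the consecutive minors, and $t$ ranges up to $N-r+1$. Letting $N\to\infty$, strict $k$-sign consistency becomes equivalent to all these minors sharing a common strict sign. That sign must be $\varepsilon=\sign(|H_g(1,k,k)|)$, because $H_g(1,k,k)$ is itself the listed minor with $r=k$, $t=1$.

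Next I will substitute \cref{eq:Hankel_k_minors}, \cref{eq:contr_op_compound} and \cref{lem:compound_multi_lin}, using $Y=\Con{k-r}(A,b)$ and $Z=A^{t-1}\Con{r}(A,b)$ together with $\compound{Z}{r}=\compound{A}{r}^{t-1}\compound{\Con{r}(A,b)}{r}$ from Cauchy--Binet. With the reindexing $s=t-(k-r)\ge 1$, this writes the minor as
\[
\compound{\Obs{k}(A,c)}{k}F^{\Con{k-r}(A,b),k}\compound{A}{r}^{s-1}\compound{A}{r}^{k-r}\compound{\Con{r}(A,b)}{r}.
\]
This is precisely the $s$-th impulse response sample of system \cref{eq:gen_compound_sys} before multiplication by $\varepsilon$. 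Hence ``all listed minors have strict sign $\varepsilon$'' is the same statement as ``each generalized compound system is strictly externally positive.''

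The main obstacle is the index bookkeeping. I must check that, as $N$ ranges over all values $\ge 2k$, the ranges of $t$ in \cref{prop:k_ssc__mat_pena_i}, namely $k-r<t$ for $r<k$ and all $t\ge1$ for $r=k$, cover exactly $s\ge1$ for each $r$. The second delicate point is the limiting argument in (i)$\Leftrightarrow$(ii): strictness, which controls zeros through $\variz{\cdot}$, has to survive the passage from finite truncations to the infinite operator. If this becomes problematic, I will instead rely on the stated equivalence between $k$-variation bounding and sign consistency from \cite{grussler2024system}.
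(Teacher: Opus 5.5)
Your overall route is the paper's. You get (ii)$\Leftrightarrow$(iii) from \cref{prop:reduced_hankel_SC_k}, the strict (hence necessary and sufficient) form of \cref{prop:k_ssc__mat_pena_i} applied to $H_g(1,k,N)^\transp$, Cauchy--Binet and \cref{lem:compound_multi_lin}. You get (i)$\Leftrightarrow$(ii) from \cref{prop:sc_k_mat_vb_m} on finite truncations. Your bookkeeping in the second part is correct:
\begin{itemize}
\item the two index families merge into $k-r+1\le t\le N-r+1$ for each $r\in(1:k)$;
\item the shift $s=t-(k-r)$ turns the minors into the impulse-response samples of \cref{eq:gen_compound_sys};
\item the common sign is forced to be $\sign(|H_g(1,k,k)|)$.
\end{itemize}

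\textbf{The gap is your limiting step in (ii)$\Rightarrow$(i) for inputs of infinite support.} A violation $\variz{y_{1:M}}\ge k$ does involve only finitely many outputs. But $y_{1:M}$ depends on all of $u$, and $\variz{\cdot}$ is only upper semicontinuous, since $\{x:\variz{x}\ge k\}$ is closed. So the outputs $H_g(1,M,N)u_{1:N}$ of the truncated inputs can all satisfy the bound and still converge to a violating $y_{1:M}$, as in $(1,\delta,1)\to(1,0,1)$. The zero entries that make the bound strict are exactly what approximation cannot control. Your argument would work for the lower semicontinuous $\vari{\cdot}$, but the theorem is about $\variz{\cdot}$.

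The paper never takes this limit. It reads (i)$\Leftrightarrow$(ii) off \cref{prop:sc_k_mat_vb_m} together with the equivalence from \cite{grussler2024system}, which is your stated fallback.

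If you want a self-contained argument, avoid approximation altogether:
\begin{enumerate}
\item Write $u_j:=u(-j)$. Pick $N$ so that $u_{1:N}$ contains all sign changes of $u$ and a nonzero entry of its last sign block; let $\sigma$ be the sign of that block. If the tail vanishes, $u$ is finitely supported and exact embedding already suffices.
\item Set $w:=\sum_{j>N}|u_j|\,H_g(j,M,1)$. Then $y_{1:M}=\begin{bmatrix}H_g(1,M,N) & w\end{bmatrix}\begin{bmatrix}u_{1:N}\\ \sigma\end{bmatrix}$ holds exactly, and the coefficient vector has at most $k-1$ sign changes.
\item Every $k$-minor that uses the column $w$ equals $\sum_{j>N}|u_j|\,m_j$, where each $m_j$ is a $k$-minor of the Hankel matrix with strict sign $\varepsilon$. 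This is a convergent series of terms of sign $\varepsilon$, not all zero, so it has strict sign $\varepsilon$. The minors not using $w$ are Hankel minors, so the finite matrix is strictly $k$-sign consistent.
\item Item 1 of \cref{prop:sc_k_mat_vb_m} now applies, taking $M\ge N+1\ge k+1$. This is harmless because $\variz{y_{1:M}}$ is nondecreasing in $M$. It gives $\variz{y_{1:M}}\le k-1$ for every $M$, hence $\variz{y}\le k-1$.
\end{enumerate}
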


Unlike the consecutive compound systems in \cref{eq:comp_real}, which encode consecutive Hankel minors, \cref{eq:gen_compound_sys} also represents minors formed by a fixed initial column block and a moving consecutive column block. Similarly, using the non-strict part of \cref{prop:k_ssc__mat_pena_i}, we arrive at the following non-strict extension.
\begin{cor}
    Let $(A,b,c)$ in \cref{eq:SISO_d} have impulse response $g$ and $1 \leq k \leq n$. Further, assume that
    \begin{enumerate}
        \item the generalized compound systems in \cref{eq:gen_compound_sys} are strictly externally positive for $r \in (1:k-1)$
        \item $(\compound{A}{k}, \compound{\Con{k}(A,b)}{k},\varepsilon \compound{\Obs{k}(A,c)}{k})$ is externally positive and $\varepsilon \compound{\Obs{k}(A,c)}{k} \compound{A}{k}^{t-1} \compound{\Con{k}(A,b)}{k} > 0$ for all $t \in (1:2k-1)$, where $\varepsilon := \sign(|H_g(1,k,k)|)$. 
    \end{enumerate}
Then, $\Hank{g}$ is $k$-sign consistent and, thus, $k-1$-variation bounding.
\end{cor}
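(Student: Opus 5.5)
The plan is to reduce the corollary to the non-strict part of \cref{prop:k_ssc__mat_pena_i}, applied (via transposition) to the $k \times N$ matrices $H_g(1,k,N)$ for every $N \geq 2k$, and then lift the conclusion to $\Hank{g}$ through \cref{prop:reduced_hankel_SC_k}. For $N \geq k$ with $N < 2k$, I would simply embed $H_g(1,k,N)$ as a column submatrix of $H_g(1,k,2k)$. Every $k$-minor of the smaller matrix is a $k$-minor of the larger one, so its sign consistency is inherited.

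I will work with $X := H_g(1,k,N)^\transp \in \mathds{R}^{N \times k}$, so that row index sets $\alpha$ of $X$ correspond to column index sets of $H_g(1,k,N)$. By \cref{eq:Hankel_k_minors}, \cref{eq:contr_op_compound} and \cref{lem:compound_multi_lin}, the minor with $\alpha = \{(1:k-r),(t:t+r-1)\}$ equals $\compound{\Obs{k}(A,c)}{k} F^{\Con{k-r}(A,b),k}\compound{A}{r}^{t-1}\compound{\Con{r}(A,b)}{r}$. After the shift $t \mapsto t + k - r$ built into the input vector of \cref{eq:gen_compound_sys}, this is $\varepsilon$ times the impulse response of the $r$-th generalized compound system at time $t-(k-r)$. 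Consequently:

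\begin{itemize}
\item Assumption 1 gives, for every $1 \leq r < k$ and every admissible $t > k-r$, that these minors have strict sign $\varepsilon$. This is exactly the first family in condition i of \cref{prop:k_ssc__mat_pena_i}.
\item For $r = k$, $F^{\cdot,k} = I$, and the minor $\alpha = (t:t+k-1)$ equals $\compound{\Obs{k}(A,c)}{k}\compound{A}{k}^{t-1}\compound{\Con{k}(A,b)}{k}$. Assumption 2 makes it strictly of sign $\varepsilon$ for $t \in (1:2k-1)$, which covers the second family $1 \leq t \leq k$ in condition i. External positivity then makes it of sign in $\{0,\varepsilon\}$ for all $t$, which covers condition ii for $k+1 \leq t \leq N-k+1$.
\end{itemize}

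It remains to check that $\varepsilon$ is the correct common sign. Here $\varepsilon = \sign(|H_g(1,k,k)|)$, and $|H_g(1,k,k)|$ is the $t=1$ consecutive minor, which is strictly nonzero by assumption 2. Hence $\varepsilon \in \{-1,1\}$ is well defined and consistent across all the families above.

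With these conditions in place, \cref{prop:k_ssc__mat_pena_i} yields that $X$, and therefore $H_g(1,k,N)$, is $k$-sign consistent with sign $\varepsilon$ for all $N \geq 2k$. The submatrix argument extends this to all $N \geq k$, and \cref{prop:reduced_hankel_SC_k} then gives $k$-sign consistency of $\Hank{g}$.

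For the final implication, that $k$-sign consistency gives $k-1$-variation bounding, I would invoke \cref{prop:sc_k_mat_vb_m}. This needs a rank or linear-independence hypothesis, so I expect it to be the main obstacle. My first attempt would use the $k$-minor $|H_g(1,k,k)| \neq 0$ together with item \ref{item:SC_lin_ind} on finite truncations $H_g(1,N,N)$, arguing that $k$-sign consistency of the truncations with some nonzero minor suffices. Failing that, I would instead cite the equivalence of $k$-sign consistency and $k-1$-variation bounding for Hankel operators from \cite{grussler2024system}, as announced in the introduction, and pass to the limit $N \to \infty$ as in \cite[Lemma~3.4]{grussler2021internally}.
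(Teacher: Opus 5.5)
Your argument for $k$-sign consistency is the paper's own. The paper obtains the corollary by applying the non-strict part of \cref{prop:k_ssc__mat_pena_i} to $H_g(1,k,N)$, reading the minors \cref{eq:Hankel_k_minors} as $\varepsilon$ times the impulse responses of the generalized compound systems, and lifting the result via \cref{prop:reduced_hankel_SC_k}. Your bookkeeping is correct: the shift $t-(k-r)$, $F^{\cdot,k}=I$ for $r=k$, $\varepsilon\neq 0$ forced by assumption 2, only $t\in(1:k)$ of the strict range being needed, and the submatrix argument for $N<2k$.

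The one step that fails as you first propose it is the final implication. Item 3) of \cref{prop:sc_k_mat_vb_m} requires $k<\rk(X)$ and that every set of $k$ columns be linearly independent, and the single nonzero minor $|H_g(1,k,k)|$ gives neither. For example, $k=n=1$, $A=0$, $b=c=1$ satisfies both hypotheses of the corollary, yet $H_g(1,N,N)=e_1e_1^\transp$ has rank one and zero columns, so no item of \cref{prop:sc_k_mat_vb_m} applies. The paper does not argue this step separately; it relies on the equivalence with variation bounding from \cite{grussler2024system} announced in the introduction, which is your fallback.

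Your first route can still be made rigorous by smoothing instead of item 3). Take $X=H_g(1,N,N)$ with $N\geq k+1$, and the strictly totally positive matrices $G_\delta=(e^{-(i-j)^2/\delta})_{i,j}$, which tend to $I$ as $\delta\to 0$. By Cauchy--Binet, $\varepsilon\compound{(G_\delta X G_\delta)}{k}=\compound{(G_\delta)}{k}\,\varepsilon\compound{X}{k}\,\compound{(G_\delta)}{k}$. This is entrywise positive, because $\varepsilon\compound{X}{k}\geq 0$ has the positive entry $\varepsilon|H_g(1,k,k)|$. Item 1) then gives $\vari{G_\delta X G_\delta u}\leq\variz{G_\delta X G_\delta u}\leq k-1$ whenever $\vari{u}\leq k-1$. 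Letting $\delta\to 0$ and using the lower semicontinuity of $\vari{\cdot}$ yields $\vari{Xu}\leq k-1$. The passage to $\Hank{g}$ is then a limit over truncations.
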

In the special case that $\Hank{g}$ strictly $k-1$-variation bounding, one can utilize \cref{prop:m_1_to_m} to simplify the verification of $k$-variation bounding via the consecutive compound systems.
\begin{cor}\label{cor:Hankel_k_to_k_plus_1}
    Let $(A,b,c)$ in \cref{eq:SISO_d} have impulse response $g$ and $1 \leq k < n$. Further, assume that 
    \begin{enumerate}
        \item the generalized compound systems in \cref{eq:gen_compound_sys} are strictly externally positive for $r \in (1:k)$
        \item there exists an $\varepsilon \in \{-1,1\}$ such that the consecutive compound system $(\compound{A}{k+1}, \compound{\Con{k+1}(A,b)}{k+1}, \varepsilon \compound{\Obs{k+1}(A,c)}{k+1})$ is (strictly) externally positive.
    \end{enumerate}
Then, $\Hank{g}$ is (strictly) $k+1$-sign consistent and, thus, (strictly) $k$-variation bounding.
\end{cor}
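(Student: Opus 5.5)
The plan is to establish $(k+1)$-sign consistency of $\Hank{g}$ through \cref{prop:reduced_hankel_SC_k}. That result reduces the claim to showing that $H_g(1,k+1,N)$ is (strictly) $(k+1)$-sign consistent for every $N \geq k+1$. Since $H_g(1,k+1,N)$ has $k+1$ rows, we work with its transpose $X := H_g(1,k+1,N)^\transp \in \mathds{R}^{N \times (k+1)}$. We then apply \cref{prop:m_1_to_m} with $m = k+1$. Once $(k+1)$-sign consistency is in hand, the variation-bounding conclusion follows from \cref{prop:sc_k_mat_vb_m} exactly as in \cref{thm:ssck_via_compound_sign}, using the equivalence of \cite{grussler2024system} between sign consistency of $\Hank{g}$ and variation bounding.

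\textbf{Hypothesis i of \cref{prop:m_1_to_m}.} By symmetry of the Hankel structure, the first $k$ columns of $X$ are $X_{(1:N),(1:k)} = H_g(1,N,k) = H_g(1,k,N)^\transp$. Assumption 1 together with \cref{thm:ssck_via_compound_sign} gives that $\Hank{g}$ is strictly $k$-sign consistent. By \cref{prop:reduced_hankel_SC_k}, $H_g(1,k,N)$ is then strictly $k$-sign consistent for all $N \geq k$, with sign $\varepsilon_k = \sign(|H_g(1,k,k)|)$. Transposition preserves minors, so hypothesis i holds with sign $\varepsilon_k$.

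\textbf{Hypothesis ii of \cref{prop:m_1_to_m}.} The consecutive $(k+1)$-minors of $X$ are the determinants $|H_g(t,k+1,k+1)|$, $t \in (1:N-k)$. By Cauchy–Binet (\cref{lem:compound_mat}),
\[
|H_g(t,k+1,k+1)| = \compound{\Obs{k+1}(A,c)}{k+1}\compound{A}{k+1}^{t-1}\compound{\Con{k+1}(A,b)}{k+1},
\]
which is exactly the impulse response at time $t$ of the consecutive compound system in assumption 2. Its (strict) external positivity with sign $\varepsilon$ therefore gives all consecutive minors the (strict) sign $\varepsilon$.

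\textbf{Main obstacle: matching signs.} \cref{prop:m_1_to_m} requires the sign of the consecutive $(k+1)$-minors to lie in $\{0,\varepsilon_k\}$, whereas assumption 2 only supplies some $\varepsilon$. In the strict case this is harmless: strict $(k+1)$-sign consistency with any sign is what we want, and we can rescale the last column, e.g. apply the proposition to $X\,\mathrm{diag}(1,\dots,1,\varepsilon\varepsilon_k)$ so that all consecutive minors become $\varepsilon_k$-signed. The issue is that this rescaling alters the first $k$ columns only when $k+1 \le k$, which never occurs, so hypothesis i survives intact. The same trick handles the non-strict case, since flipping one column multiplies every $(k+1)$-minor by the common factor $\varepsilon\varepsilon_k$ and thus leaves $(k+1)$-sign consistency unaffected. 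If $\varepsilon$ is meant to equal $\varepsilon_k$, the rescaling is unnecessary. The only remaining care point is the index range: $N$ must run over all values $\geq k+1$, and the case $N = k+1$ is trivial.

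Combining the two hypotheses, $H_g(1,k+1,N)$ is (strictly) $(k+1)$-sign consistent for all $N$. \cref{prop:reduced_hankel_SC_k} then yields (strict) $(k+1)$-sign consistency of $\Hank{g}$, and the $k$-variation bounding conclusion follows.
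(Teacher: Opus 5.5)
Your proposal is correct and follows the route the paper indicates for this corollary. Assumption 1 with \cref{thm:ssck_via_compound_sign} supplies the strict $k$-sign consistency needed for hypothesis i of \cref{prop:m_1_to_m}, and Cauchy--Binet identifies the consecutive $(k+1)$-minors $|H_g(t,k+1,k+1)|$ with the impulse response of the consecutive compound system in assumption 2; your last-column rescaling, which reconciles the free $\varepsilon$ of assumption 2 with $\sign(|H_g(1,k,k)|)$, is a valid detail that the paper leaves implicit.
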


\subsection{Toeplitz Operator}
\cref{thm:ssck_via_compound_sign} can also be applied to check whether the \emph{Toeplitz operator}
$$(\Toep{g} u)(t) := \sum_{\tau=0}^{t} g(t-\tau)u(\tau), \ t \geq 0$$
of \cref{eq:SISO_d} is $k-1$-variation bounding. Using the \emph{Toeplitz matrix} \begin{multline*} \scriptsize \hspace{-0.4 cm}
T_g(t,m,n) := \begin{bmatrix}
	g(t) & g(t-1) & \dots & g(t-n+1)\\
	g(t+1) & g(t) & \dots & g(t-n+2)\\
	\vdots & \vdots &  & \vdots \\
	g(t+m-1)  & g(t+m-2) & \dots & g(t+m-n)\\
	\end{bmatrix}
\end{multline*}
it follows from \cite{grussler2026efficient}, that one can equivalently verify that $T_g(N-k,k,2N-k) \in \mathds{R}^{k \times 2N-k}$ is $k$-sign consistent for all \(N \ge k\). In particular, reversing the column order of a Toeplitz matrix results in a
Hankel matrix to which our previous analysis can be applied to. The main differences between the Hankel and Toeplitz case are the following:
\begin{enumerate}[i.)]
    \item For causal systems, $T_g(N-k,k,2N-k)$ will contain zero $k$-minors, which prevents strict $k$-sign consistency.
    \item If $g(t_0) \neq 0$ and $g(t) = 0$ for all $t < t_0$, then $|T_g(t_0,k,k)| = g(t_0)^k$ determines the sign. Even $k$ can only give a positive sign. 
\end{enumerate}
\subsection{Observability Operator}
Variation bounding observability operators
\begin{align*}
 	(\Obs{}(A,c) x_0)(t) &:= cA^t x_0, \ x_0 \in \mathds{R}^n,  \ t \geq 0. 
 \end{align*}
 have been characterized and analyzed in \cite{grussler2024system}. Our presented approach based on \cref{lem:compound_multi_lin} provides a unifying framework that enables determining realizations of the necessary generalized compound systems. For example, using $\mathcal{O}^N(A,c) = \mathcal{C}^N(A^\transp,c^\transp)^\transp$, it follows by \cref{eq:contr_op_compound} that strict $n$-sign consistency of $\Obs{}(A,c)$ is equivalent to the strict external positivity of
\begin{equation*}
\left(A^\transp_{[r]}, {\compound{A^\transp}{r}}^{n-r}\compound{\Con{r}(A^\transp,c^\transp)}{r},|\Obs{n}(A,c)|F^{\Con{n-r}(A^\transp,c^\transp),n}\right).
\end{equation*}
for all $r \in (1:n)$. 
\subsection{Pole Configuration}
As seen in \cite{grussler2020variation,grussler2024system}, variation bounding properties of system operators induce restrictions on the system's pole configuration. This also remains true for strictly sign consistent Hankel operators as shown here for $k=2$. 
\begin{cor}\label{cor:poles}
Let $(A,b,c)$ be a minimal realization with impulse response $g$, (strictly) $1$-variation bounding $\Hank{g}$ and $\lambda_1(A) \neq \lambda_2(A) \neq \lambda_3(A)$. Then, $\lambda_1(A) > \lambda_2(A) \geq (>)0$ 
\end{cor}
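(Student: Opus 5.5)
We argue via asymptotics of the impulse responses of the generalized compound systems in \cref{thm:ssck_via_compound_sign} for $k=2$; in the non-strict case we use the non-strict analogue, i.e., nonnegativity of the $2$-minors of $H_g(1,2,N)$ with a common sign $\varepsilon$. For $r=1$, \cref{eq:Hankel_k_minors} gives the minors
\[
\begin{vmatrix} g(1) & g(t)\\ g(2) & g(t+1)\end{vmatrix} = g(1)g(t+1)-g(2)g(t),
\]
so $h_1(t):=g(1)g(t+1)-g(2)g(t)$ is the impulse response of a system with state matrix $A$. For $r=2$ we get the consecutive minors $h_2(t):=g(t)g(t+2)-g(t+1)^2$, the impulse response of a system with state matrix $\compound{A}{2}$, whose spectrum is $\{\lambda_i\lambda_j: i<j\}$ by \cref{lem:compound_mat}. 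Minimality makes every pole of $A$ appear in $g$ with a nonzero residue, and $\varepsilon h_1,\varepsilon h_2\ge(>)0$ for all $t$.

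First we show that $\lambda_1$ is real and positive. By minimality, $g(t)=\sum_i \rho_i(t)\lambda_i^{t-1}$ with nonzero (polynomial) coefficients. Since the $\lambda_i$ are sorted by modulus and $\lambda_1\neq\lambda_2$, the second one only rules out a repeated eigenvalue in the first slot. If $\lambda_1$ were nonreal, then $\bar\lambda_1=\lambda_2$, and $h_1(t)$ would be dominated by a term $|\lambda_1|^t\cos(\theta t+\phi)$. This term changes sign infinitely often, unless its coefficient, built from $g(1)\lambda_1-g(2)$ and the residues, vanishes. We rule this out by computing $h_1$ in modal form: the $\lambda_i$-mode of $h_1$ has coefficient $\rho_i(g(1)\lambda_i-g(2))$, which vanishes only if $g(2)=\lambda_i g(1)$. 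Since $g(1),g(2)$ are real, this is impossible for nonreal $\lambda_i$ unless $g(1)=g(2)=0$. That degenerate case is handled separately through $|H_g(1,2,2)|\neq0$ in the strict case, or by shifting. Next, if $\lambda_1<0$, then $h_2$ is dominated by the mode $\lambda_1^2\cdot\lambda_1^{2(t-1)}$ paired with the leading $\lambda_1\lambda_2$ contribution. More cleanly, $g$ itself alternates sign asymptotically and $\lambda_1^2>0$ gives no contradiction, so we use $h_1$ instead: its dominant mode $\lambda_1^t$ alternates in sign whenever its coefficient is nonzero, i.e., unless $g(2)=\lambda_1 g(1)$. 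If that coefficient vanishes, we pass to the next mode. Hence $\lambda_1>0$.

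Then we handle $\lambda_2$. The dominant term of $h_2$ comes from $\lambda_1\lambda_2$, the largest-modulus eigenvalue of $\compound{A}{2}$ that actually appears. Indeed, $\lambda_1^2$ cancels in $g(t)g(t+2)-g(t+1)^2$, as for any single geometric mode. Writing $g\approx\alpha\lambda_1^{t}+\beta\lambda_2^{t}+\dots$, the cross term gives
\[
h_2(t)\approx\alpha\beta(\lambda_1\lambda_2)^t(\lambda_2-\lambda_1)^2/(\lambda_1\lambda_2)\cdot\text{const}\neq0.
\]
If $\lambda_2$ were nonreal, then $\lambda_3=\bar\lambda_2$ has the same modulus and $h_2$ oscillates, which is a contradiction. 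This is where $\lambda_2\neq\lambda_3$ is used, or rather where oscillation is derived from it. If $\lambda_2<0$, then $(\lambda_1\lambda_2)^t$ alternates, again a contradiction. So $\lambda_2\ge0$. In the strict case, $\lambda_2=0$ must also be excluded: then $g$ is eventually a pure $\lambda_1$-geometric sequence plus finitely many nilpotent terms, so $h_2(t)=0$ for large $t$, contradicting strictness. Finally, $\lambda_1>\lambda_2$ follows from $\lambda_1\neq\lambda_2$ together with the ordering.

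The main obstacle is the bookkeeping of non-vanishing coefficients. We must show that the dominant modes of $h_1$ and $h_2$ really appear, that is, no cancellation occurs. This is where minimality, through controllability of the compound pairs, and the distinctness assumption must be used carefully. Jordan blocks among the smaller eigenvalues only contribute lower-order terms.
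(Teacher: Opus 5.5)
Your strategy is the paper's in substance. You apply a Pringsheim/Perron--Frobenius-type dominant-mode argument to a one-signed sequence built from $g$ to get $\lambda_1>0$. You then apply it to the consecutive $2$-minors $h_2$, the impulse response of the compound system on $\compound{A}{2}$ with relevant mode $\lambda_1\lambda_2$, to get $\lambda_2\ge(>)0$. Where the paper cites controllability/observability of $\lambda_1\lambda_2$, you use explicit modal formulas. However, two steps do not go through as written.

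\textbf{The exclusion of $\lambda_1<0$.} If $g(2)=\lambda_1 g(1)$, the $\lambda_1$-mode is absent from $h_1$. ``Passing to the next mode'' then tells you something about that mode, not about $\lambda_1$, so ``hence $\lambda_1>0$'' does not follow. Your aside that $g$ itself would alternate is exactly the contradiction you need. Applying $\Hank{g}$ to a unit impulse returns (a shift of) $g$, so $\vari{g}\le 1$ and a tail of $g$ is one-signed. For a minimal realization this forces the dominant pole to be real and nonnegative. This is the paper's first step. It also covers the nonreal case without the $g(1)=g(2)=0$ detour, and $\lambda_1=0$ is excluded by $\lambda_1\neq\lambda_2$. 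If you prefer minors, use $g(s)g(t+1)-g(s+1)g(t)$, $t>s$, for an $s$ with $g(s+1)\neq\lambda_1 g(s)$; such an $s$ exists because $g$ is not geometric.

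\textbf{Nonreal $\lambda_2$.} The claim that a nonreal $\lambda_2$ makes $h_2$ oscillate holds only when $|\lambda_2|<\lambda_1$. Under the literal hypothesis, $|\lambda_2|=\lambda_1$ is allowed, e.g.\ $\sigma(A)=\{1,e^{\pm i\theta}\}$. Then $\lambda_2\lambda_3=\lambda_1^2>0$ is itself a dominant eigenvalue of $\compound{A}{2}$. It enters $h_2(t)=\sum_{p<q}\rho_p\rho_q(\lambda_p\lambda_q)^{t-1}(\lambda_p-\lambda_q)^2$ (simple eigenvalues) with coefficient $\rho_2\rho_3(\lambda_2-\lambda_3)^2=-4|\rho_2|^2(\mathrm{Im}\,\lambda_2)^2<0$.

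For $g(t)=a+2\,\mathrm{Re}(\rho e^{i\theta(t-1)})$ this gives $h_2(t)=-4|\rho|^2\sin^2\theta+2a\,\mathrm{Re}\bigl(\rho(1-e^{i\theta})^2e^{i\theta(t-1)}\bigr)$. This is negative for every $t$ once $|a|<2|\rho|\cos^2(\theta/2)$, so there is no oscillation.

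A concrete case is $G(z)=3z^2/(z^3-1)$, i.e.\ $g=(3,0,0,3,0,0,\dots)$. It is minimal with $\sigma(A)=\{1,e^{\pm 2\pi i/3}\}$, and it has $g\ge 0$ and $h_2\le 0$. Neither the sign of $g$ nor external positivity of the consecutive compound system (with $\varepsilon=-1$) is contradicted. What rules it out is the $r=1$ family under the common sign: $h_1(3)=3g(4)=9>0$, but $h_2(3)=-9<0$.

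So in this configuration you must genuinely combine $h_1$ (or $g$) with $h_2$. The paper's sketch reasons only with the consecutive compound system and does not address this case either.

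For $|\lambda_2|<\lambda_1$, your own formula settles the ``bookkeeping'' you flag as the main obstacle:
\begin{itemize}
\item the dominant products $\lambda_1\mu$ with $|\mu|=|\lambda_2|$ are pairwise distinct and are produced by no other pair;
\item their coefficients $\rho_1\rho_\mu(\lambda_1-\mu)^2$ (times a polynomial in $t$ for Jordan blocks) are nonzero;
\item by the ordering, none of them is positive real unless $\lambda_2>0$.
\end{itemize}
Hence the zero-mean oscillation argument applies there. Your strict-case exclusion of $\lambda_2=0$ is fine.
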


\begin{proof}
Since $\Hank{g}$ is $1$-variation bounding, it follows for the $0$-variation input $u = \mathbf{1}_{\{0\}}$ with $g(t) = \Hank{g}u(t)$, $t \geq 1$, that $\vari{g} \leq 1$. Hence, there exists a $t_0 \in \mathds{Z}_{\geq 0}$ such that $(A,A^{t_0}b,\sign(cA^{t_0}b)c)$ is externally positive, which requires that $\lambda_1(A) \in \mathds{R}_{\geq 0}$ (see, e.g., \cite{farina2011positive}). In particular, if $\lambda_1(A) = 0$, then the system is of finite impulse response, which would contradict that $\variz{\Hank{g}u} \leq 1$ when $\Hank{g}$ is strictly $1$-variation bounding.

Finally, by \cref{thm:ssck_via_compound_sign}, the consecutive compound system 
\begin{equation}
    (\compound{A}{2}, \compound{\Con{2}(A,b)}{2}, \sign(|H_g(1,2,2)|)\compound{\Obs{2}(A,c)}{2}) \label{eq:second_comp}
\end{equation} is required to be (strictly) externally positive. Note that while \cref{thm:ssck_via_compound_sign} is stated only for the strict case, in the non-strict case, external positivity remains necessary because the impulse response of \cref{eq:second_comp} corresponds to signed Hankel minors that must be nonnegative. under the assumption that $\lambda_1(A) \neq \lambda_2(A) \neq \lambda_3(A)$, it can be shown as in \cite[Theorem~15]{grussler2020variation} that $\lambda_1(A)\lambda_2(A)$ is an observable and controllable mode of \cref{eq:second_comp}. Analogously to above, this necessitates that $\lambda_1(A)\lambda_2(A) \geq (>) 0$, which concludes the proof. 
\end{proof}

The pole restriction in \cref{cor:poles} differs from that of autonomous variation-bounding systems $$x(t+1)=A x(t),$$ where $A$ is required to be $k$-sign consistent \cite{alseidi2021discrete}. To see this, consider the strictly $2$-sign consistent
\begin{equation*}
A =
\begin{bmatrix}
7 & -1 & -5\\
5 & 7 & -1\\
1 & 5 & 7
\end{bmatrix}\quad \text{with} \quad
\compound{A}{2}= 18
\begin{bmatrix}
3 & 1 & 2\\
2 & 3 & 1\\
1 & 2 & 3
\end{bmatrix}>0,
\end{equation*}
with 
\begin{equation*}
\sigma(A)
=
\left\{
9+3\sqrt{3}\,\mathrm{i},
9-3\sqrt{3}\,\mathrm{i},
3
\right\},
\end{equation*}
The system is strictly $1$-variation bounding:  $\vari{x(0)} \leq 1$ implies $\vari{x(t)} \leq 1$ for all $t \geq 0$, but its two dominant eigenvalues are non-real. Thus, bounding the variation across state components does not impose the same dominant-pole restriction as variation bounding through the Hankel operator. 

\section{EXAMPLE}\label{sec:ex}
In the following, we illustrate our results based on a discretized
\emph{three-node lumped-parameter thermal system}, which also demonstrates how $1$-variation bounding Hankel operators arise in physical systems.

Consider the three-node thermal chain in \cref{fig:three_node_interconnection}, extending the lumped-capacity model in \cite[p.~201]{lienhard2024heat}. The states $\tilde T_i=T_i-T_{air}$ are temperatures relative to ambient, $u(t)$ is the heat input rate to node~1, and $y=\tilde T_3-\gamma\tilde T_1$. Each node has thermal capacitance $c_i$. The resistances $R_i$ govern heat losses $\dot Q_i$ to ambient, while $R_{12},R_{23}$ govern inter-node heat flows $\dot Q_{12},\dot Q_{23}$.
\begin{figure}[t]
\centering
\begin{tikzpicture}[
    scale=1,
    transform shape,
    >=Latex,
    thick,
    font=\normalsize,
    block/.style={
        draw,
        minimum width=1cm,
        minimum height=1cm,
        align=center
    },
    gain/.style={
        draw,
        minimum width=0.7cm,
        minimum height=0.45cm,
        align=center
    },
    line/.style={-Latex, thick}
]

\node[block] (b1) {$\tilde T_1$};
\node[block, right=1.35cm of b1] (b2) {$\tilde T_2$};
\node[block, right=1.35cm of b2] (b3) {$\tilde T_3$};

\draw[line] ($(b1.west)+(-0.8,0)$) -- (b1.west)
    node[midway, above] {$u$};

\draw[line] (b1.north) -- ++(0,0.75)
    node[midway, right] {$\dot Q_1$};

\draw[line] (b2.north) -- ++(0,0.75)
    node[midway, right] {$\dot Q_2$};

\draw[line] (b3.north) -- ++(0,0.75)
    node[midway, right] {$\dot Q_3$};

\draw[line] ($(b1.east)+(0,0.13)$) -- ($(b2.west)+(0,0.13)$)
    node[midway, above] {$\dot Q_{12}$};

\draw[line] ($(b2.west)+(0,-0.13)$) -- ($(b1.east)+(0,-0.13)$);

\draw[line] ($(b2.east)+(0,0.13)$) -- ($(b3.west)+(0,0.13)$)
    node[midway, above] {$\dot Q_{23}$};

\draw[line] ($(b3.west)+(0,-0.13)$) -- ($(b2.east)+(0,-0.13)$);

\node[draw, circle, minimum size=0.55cm, right=0.7cm of b3] (sum) {$-$};

\draw[line] (b3.east) -- (sum.west);

\draw[line] (sum.east) -- ++(0.8,0)
    node[midway, above] {$y$};

\coordinate (fbdown) at ($(b1.south)+(0,-0.45)$);
\coordinate (gleft)  at ($(sum.south)+(-4.15,-0.6)$);
\coordinate (gright)  at ($(sum.south)+(-3.45,-0.6)$);
\coordinate (sumsouth) at ($(sum.south)+(0,-0.6)$);

\draw[thick] (b1.south) -- (fbdown);
\draw[line] (fbdown) -- (gleft);

\node[gain] (gainblock) at ($(sum.south)+(-3.8,-0.6)$) {$\gamma$};

\draw[thick] (gright) -- (sumsouth);

\draw[line] (sumsouth) -- (sum.south);

\end{tikzpicture}
\caption{Three-node lumped-parameter thermal interconnection.}
\label{fig:three_node_interconnection}
\end{figure}

The node-wise energy balances are
\begin{align*}
c_1 \dot{\tilde{T}}_1 &= u(t) - \frac{\tilde T_1}{R_1} - \frac{\tilde T_1-\tilde T_2}{R_{12}} \\
c_2 \dot{\tilde{T}}_2 &= - \frac{\tilde T_2}{R_2} + \frac{\tilde T_1-\tilde T_2}{R_{12}} - \frac{\tilde T_2-\tilde T_3}{R_{23}} \\
c_3 \dot{\tilde{T}}_3 &= - \frac{\tilde T_3}{R_3} + \frac{\tilde T_2-\tilde T_3}{R_{23}}
\end{align*}
For $c_1=1$, $c_2=1.5$, $c_3=2$, $R_{12}=1$, $R_{23}=1.5$, $R_1=R_2=R_3=5$, and $\gamma=0.2$, zero-order-hold discretization with sampling interval $\Delta t=0.5$ gives the system in \cref{eq:SISO_d} with
\begin{equation}
\begin{aligned}
A &= \begin{bmatrix} 
0.5950 & 0.2809 & 0.0351 \\
0.1872 & 0.5942 & 0.1517 \\
0.0176 & 0.1138 & 0.8185
\end{bmatrix}, \quad
b = \begin{bmatrix} 0.3849 \\ 0.0569 \\ 0.0033 \end{bmatrix}\\
c &= \begin{bmatrix} -0.2 & 0 & 1 \end{bmatrix}.
\end{aligned} \label{eq:three_node_lumped}
\end{equation}
Intuitively, for the impulse input $u(t) = \mathbf{1}_{\{0\}}$, the output will be initially dominated by a rapid rise of $x_1$, which causes a negative excursion of the output due to $-\gamma$. As heat diffuses from node~2 to node~3, $x_3$ eventually prevails, pulling $y(t)$ towards positive values. Therefore, as illustrated in \cref{fig:thermal_responses_3node}, the impulse response $g$ satisfies $\variz{g} = 1$. Next, let us verify that $\Hank{g}$ is strictly $1$-variation bounding. By \cref{thm:ssck_via_compound_sign}, we have to check that the generalized compound systems
\begin{align*}
    r = 1: & \left(A,Ab,\varepsilon \compound{\Obs{2}(A,c)}{2} F^{b,2}\right)\\
    r = 2: & \left(A_{[2]}, \compound{\Con{2}(A,b)}{2}, \varepsilon \compound{\Obs{2}(A,c)}{2}\right)
\end{align*}
with $\varepsilon = \sign|H_g(1,2,2)|$, are strictly externally positive.

This is illustrated graphically in \cref{fig:thermal_responses_3node}, but can also be achieved numerically using, e.g., \cite{taghavian2023external} or the discrete-time version of \cite{grussler2019tractable} as shown in \cite{grussler2017identification}. Moreover, note that the system poles are (up to rounding errors) given by $\sigma(A) = \{0.9366, 0.7187, 0.3523\} \subset \mathds{R}_{> 0}$, confirming \cref{cor:poles}.
\begin{figure}[htbp]
    \centering
    \begin{tikzpicture}
        \begin{axis}[
            name=plot1,
            width=0.9\columnwidth,
            height=0.35\columnwidth,
            ylabel={$g(t)$},
            grid=major,
            xmin=0, xmax=50,
            axis x line=bottom,
            axis y line=left,
        ]
        \addplot [
            ycomb, color=blue, thick, mark=*, mark options={fill=blue, scale=0.6}
        ] table {impulse_response.txt}; 
        
        \end{axis}

        \begin{axis}[
            name=plot2,
            at={(plot1.south west)},
            anchor=north west,      
            yshift=-0.7cm,        
            width=0.9\columnwidth, 
            height=0.35\columnwidth, 
            ylabel={$g_{1,2}(t)$},
            grid=major,
            xmin=0, xmax=50,
            axis x line=bottom,
            axis y line=left,
        ]
        \addplot [
            ycomb, color=red, thick, mark=*, mark options={fill=red, scale=0.6}
        ] table {sigma_km_response.txt};
        
        \end{axis}

        \begin{axis}[
            name=plot3,
            at={(plot2.south west)},
            anchor=north west,      
            yshift=-0.7cm,      
            width=0.9\columnwidth, 
            height=0.35\columnwidth, 
            xlabel={$t$},
            ylabel={$g_{2,2}(t)$},
            grid=major,
            xmin=0, xmax=50,
            axis x line=bottom,
            axis y line=left,
        ]
        \addplot [
            ycomb, color=purple, thick, mark=*, mark options={fill=purple, scale=0.6}
        ] table {sigma_vals_consecutive.txt};
        
        \end{axis}
        
    \end{tikzpicture}
    
    \caption{Impulse responses for the three-node thermal system with matrices in \cref{eq:three_node_lumped}: (top) original system; (middle) generalized compound system with $r=1$, $k=2$; (bottom) consecutive compound system with $r=2$, $k=2$. The strict external positivity of the compound systems implies that $\Hank{g}$ is strictly $1$-variation bounding. Since $\vari{g}>0$, $\Hank{g}$ is not $0$-variation bounding and hence is not $2$-variation diminishing.}
    \label{fig:thermal_responses_3node}
\end{figure}

Finally, since $\compound{A}{3}, \compound{\Con{3}(A,b)}{3}, \compound{\Obs{3}(A,c)}{3} \in \mathds{R} \setminus \{0\}$, it follows from \cref{cor:Hankel_k_to_k_plus_1} that $\Hank{g}$ is also strictly $2$-variation bounding. Notably, while the system does not belong to the conventional external and internally positive class, it still exhibits many generalized positivity properties, which so far have not been explored and utilized. 

\section{CONCLUSION}
\label{sec:conc}
In this work, we have derived a certificate for when the Hankel operator of a DTLTI system leaves the set of signals with at most $k-1$ sign changes invariant. Our certificate parallels earlier works \cite{grussler2020variation,grussler2024system} by encoding this property as the (strict) external positivity of $k$ generalized DTLTI compound systems. The latter can be efficiently checked using numerical methods such as \cite{taghavian2023external,grussler2019tractable,drummond2023externally}.
In particular, while naive verification of $k$-sign-consistency would require checking infinitely many Hankel minors in both row and column directions, our result reduces this to external-positivity tests of only $k$ finite-dimensional generalized compound systems. Our new approach extends and unifies earlier investigations on $k$-variation bounding observability operators \cite{grussler2024system}, as well as $k$-variation diminishing Hankel and Toeplitz operators \cite{grussler2020variation,grussler2021internally}.
Furthermore, we show that the two largest dominant poles of a (strictly) $1$-variation bounding Hankel operator need to be real and (strictly) positive. In future work, we will generalize this result to general $k$, derive decompositions of the system's transfer function (cf.~\cite{grussler2020variation}), and analyze its impacts on model order reduction (cf.~\cite{grussler2020balanced}).
\printbibliography

\end{document}